\documentclass[12pt]{article}
\usepackage{verbatim}
\usepackage{amssymb,amsmath}
\usepackage{amsthm}
\usepackage{graphicx}
\usepackage{epsfig}
\usepackage{xcolor}
\usepackage{tikz}
\usetikzlibrary{arrows.meta,calc}
\newtheorem{theorem}{Theorem}
\newtheorem{corollary}{Corollary}[section]
\newtheorem{lemma}[corollary]{Lemma}
\newtheorem{proposition}[corollary]{Proposition}

\newcommand{\beq}{\begin{eqnarray*}}
\newcommand{\eeq}{\end{eqnarray*}}
\newcommand{\beqn}{\begin{eqnarray}}
\newcommand{\eeqn}{\end{eqnarray}}

\newcommand{\Pro}{{\bf P}}
\newcommand{\Z}{{\mathbb Z}}

\newcommand{\C}{{\mathbb C}}

\newcommand{\x}{{\bf x}}
\newcommand{\y}{{\bf y}}

\def \ga {{\gamma}}

\def \p {\partial}

\def \Half {{\mathbb H}}

\newenvironment{remark}[1][Remark]{\begin{trivlist}
\item[\hskip \labelsep {\bfseries #1}]}{\end{trivlist}}
\newenvironment{definition}[1][Definition]{\begin{trivlist}
\item[\hskip \labelsep {\bfseries #1}]}{\end{trivlist}}

\newcommand{\onearm}{\pi_1^+}
\newcommand{\hp}{\widehat p}

\begin{document}
\title{An Exact Strip Observable and the Half-Plane One-Arm Probability\\
for Critical Bond Percolation on the Square Lattice}

\author{Wang Zhou\thanks{ Department of Statistics and Data Science, National University of Singapore, Singapore 117546. Email: stazw@nus.edu.sg}\\
{\it \small National University of Singapore}}

\date{}

\maketitle

\begin{abstract}
We prove the half-plane one-arm exponent for critical Bernoulli bond percolation on the square lattice.  Ikhlef and Ponsaing obtained an exact formula for the probability that the unique infinite percolation hull running along an odd-width strip passes through a prescribed boundary edge.  Their interpolation, however, treats a rational function as a polynomial.  We clear the full denominator, prove a coordinatewise Laurent-degree bound directly at $q^3=1$, and establish the exact strip formula from the deletion relations.  At the homogeneous point, the qKZ expression is the Bernoulli probability of this boundary-passage event.  A finite local modification, the RSW theorem, and one-arm quasi-multiplicativity then give, uniformly for $1\le r<R$,
$\Pro_{1/2}(A_1^+(r,R))\asymp (r/R)^{1/3}$.
\end{abstract}
\bigskip
\noindent {\bf Key words and Phrases:} critical bond percolation, half-plane one-arm exponent, qKZ equation, transfer matrix, RSW.

\smallskip
\noindent {\bf AMS 2000 subject classification:} 60K35, 82B27, 82B43, 05E05.

\tableofcontents

\section{Introduction}

Boundary arm probabilities are basic quantities in planar percolation.  They
control how critical clusters approach a boundary and how exploration paths
visit small boundary neighbourhoods.  Russo--Seymour--Welsh (RSW) estimates and
quasi-multiplicativity compare such probabilities across scales.  These tools
do not by themselves determine the exact exponent.

We consider critical Bernoulli bond percolation on the square lattice.  Fix
integers $1\le r<R$.  Let $A_1^+(r,R)$ be the event that an open path in a
half-plane joins two concentric box boundaries of radii $r$ and $R$.  Our main
result is the following.

\begin{theorem}\label{thm:main-intro}
There are universal constants $0<c<C<\infty$ such that, for all integers
$1\le r<R$,
\[
 c\left(\frac rR\right)^{1/3}
 \le \Pro_{1/2}\bigl(A_1^+(r,R)\bigr)
 \le C\left(\frac rR\right)^{1/3}.
\]
\end{theorem}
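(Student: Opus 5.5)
The plan is to reduce the theorem to an exact finite-volume computation in a strip, as outlined in the abstract. Fix an odd width $L=2n+1$ and consider critical bond percolation on the half-infinite (or infinite) strip of width $L$ with the loop (hull) representation. There is a unique infinite hull running along the strip. Let $p_L$ be the probability that this hull passes through a prescribed boundary edge at the bottom of the strip.

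The first and hardest step is to prove the exact Ikhlef--Ponsaing formula for $p_L$. I would realize $p_L$ as a component of the ground state of the inhomogeneous transfer matrix at $q=e^{2\pi i/3}$, which is the qKZ solution. Because that component is a rational function of the spectral parameters, I would:
\begin{enumerate}
\item multiply through by the full product of denominators, obtaining a Laurent polynomial;
\item bound its Laurent degree in each spectral parameter separately, working directly at $q^3=1$ and not by a limiting argument;
\item use the deletion (recursion) relations, which specialize the polynomial when $z_{i+1}=q^{2}z_i$ to the width-$(L-2)$ object, to pin it down by Lagrange interpolation at enough points.
\end{enumerate}
The main obstacle is the degree bound in step 2. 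This is exactly where the original interpolation argument treats a rational function as a polynomial, so I would attack it through the exchange equations of the qKZ system, checking the pole cancellations explicitly. Once the formula holds, specializing to the homogeneous point $z_i\equiv1$ identifies it with the Bernoulli probability of the boundary-passage event. An explicit product formula (ratios of ASM-type numbers) then gives $p_L\asymp L^{-1/3}$ by Stirling.

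Second, I would transfer this to the arm event. The hull passing through the boundary edge in the middle of a strip of width $L$ forces an open (or dual-open) half-plane arm from that edge to distance of order $L$. Conversely, given a half-plane one-arm event to distance $cL$, RSW crossings in boxes at scale $L$, combined with a finite local modification at the edge (which costs a constant factor by finite energy), produce the passage event. Self-duality at $p=1/2$ lets the primal and dual arms be interchanged. This gives $\Pro_{1/2}(A_1^+(1,L))\asymp p_L\asymp L^{-1/3}$. I would take care that the strip's other boundary, at distance $L$, only affects the estimate by constants via RSW.

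Finally, for general $1\le r<R$, I would use quasi-multiplicativity of the half-plane one-arm probability:
\[
\Pro(A_1^+(1,R))\asymp \Pro(A_1^+(1,r))\,\Pro(A_1^+(r,R)).
\]
This yields $\Pro(A_1^+(r,R))\asymp R^{-1/3}/r^{-1/3}=(r/R)^{1/3}$ with universal constants. For $R\le 2r$ the claim is trivial by RSW.
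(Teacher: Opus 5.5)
Your plan follows the paper's proof: clear the full denominator $\mathcal Z_LK_L=\chi_L^2K_L$, and bound the cleared numerator coordinatewise via the qKZ exchange relations at $q^3=1$. Then interpolate against the Ikhlef--Ponsaing candidate using deletion, identify the homogeneous value with $\hp_L$ through the ergodic stochastic transfer matrix, and get $\hp_L\asymp L^{-1/3}$ from the product formula. Finally, pass to $\onearm$ by a local modification, RSW, and quasi-multiplicativity.

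Two points need fixing when you write it out. First, $\hp_L$ is the normalized matrix element $\langle\Psi_L^\uparrow,\widehat\rho_b^{(L)}\Psi_L^\downarrow\rangle/\mathcal Z_L$, not a ground-state component. Second, the deletion points alone give only $L-1$ nodes in $x_L$. You therefore need the symmetry and $x_j\mapsto x_j^{-1}$ invariance of $\widehat P_L$ to reach the $4(L-1)>2L$ nodes that the degree bound $[-L,L]$ requires.
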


Ikhlef and Ponsaing studied a related passage event in an odd-width strip
\cite{IP}.  Their method uses the reflecting-boundary qKZ equations and
classical character identities.  It leads to an exact expression for the
probability that the infinite hull passes through a prescribed reflecting
edge.  One step in their argument needs correction.  The inhomogeneous
quantity is a rational function of the spectral parameters, whereas the
interpolation in \cite[proof of Proposition~4.6]{IP} treats it as a
polynomial.

We repair this step by clearing the complete denominator.  We also include the
square of the qKZ normalization.  The resulting numerator is a Laurent
polynomial.  We prove a coordinatewise degree bound directly from the qKZ
component relation at $q^3=1$.  The deletion relations then provide enough
specializations to determine the numerator by Laurent interpolation.

The exact algebraic formula must still be connected with Bernoulli
percolation.  At the homogeneous point, the double-row transfer matrix defines
a finite Markov chain on odd link patterns.  We prove that this chain is
irreducible and aperiodic.  Its stationary law is the normalized homogeneous
qKZ vector.  The corresponding matrix element is therefore the probability
that the infinite hull passes through the reflecting edge $e_b$.

We next compare this strip event with an ordinary open connection across the
strip.  The comparison changes only finitely many bonds near $e_b$ and loses
at most a fixed factor in probability.  The RSW theorem then compares the strip
connection with the half-plane one-arm event.  Finally,
quasi-multiplicativity gives the estimate for all pairs of scales $r<R$.

The paper is organized as follows.  Section~2 defines the percolation events
and the odd link-pattern space.  Section~3 proves the inhomogeneous strip
formula.  Section~4 evaluates its homogeneous specialization and obtains an
exact recurrence in the strip width.  Section~5 compares the event
$\{e_b\subseteq\ga_L\}$ with an open strip connection.  Section~6 compares
that connection with the half-plane one-arm event and proves the theorem.

We use the standard qKZ exchange, reflection, and vector-deletion relations
from \cite{IP}.  We also use the classical character evaluations stated in
Section~4.  The identities for the event $\{e_b\subseteq\ga_L\}$, the degree
bound, the corrected interpolation, and the Bernoulli interpretation are
proved here.

\section{Percolation events and strip connectivities}

We work with critical Bernoulli bond percolation on the square lattice.  Let
\[
  \Half=\{(x,y)\in\Z^2:x\ge 0\}.
\]
For integers $1\le r<R$, let $A_1^+(r,R)$ be the following event.  There is
a primal-open path in $\Half$ from the boundary of a box of radius $r$ to the
boundary of the concentric box of radius $R$.  Both boxes are centred at the
origin.  We write
\[
  \onearm(r,R)=\Pro_{1/2}\bigl(A_1^+(r,R)\bigr),
  \qquad
  \onearm(R)=\onearm(1,R).
\]
The choice between Euclidean and $\ell^\infty$ boxes is immaterial up to a
universal constant.  The same is true if the initial boundary vertex is moved
by a bounded distance.

For odd $L$, consider the infinite strip of Ikhlef and Ponsaing
\cite[Fig.~2(b) and Definition~4.2]{IP}.  One primal side is free.  The
opposite side is wired.  All interior primal bonds are
independent Bernoulli variables of parameter $1/2$.  In the medial representation, the configuration consists of nonintersecting loops and one unpaired interface. The unpaired interface is present because a transverse section contains an odd number of medial endpoints. It extends infinitely in both longitudinal directions and is the unique bi-infinite percolation interface running along the strip. We denote it by \(\ga_L\).

Equivalently, cut the strip at a transverse section.  The connectivity below
the cut is an \emph{odd link pattern}.  It is a noncrossing matching of the
$L$ endpoints, with exactly one endpoint connected to infinity.  We denote the
finite set of these patterns by $\mathrm{LP}_L$.  Gluing an upward and a
downward odd link pattern produces closed loops and exactly one through-going
interlace.  When the strip is extended in both directions, this interlace becomes
$\ga_L$.  This finite set will be used in the probabilistic identification
in Proposition~\ref{prop:stochastic-identification}.

\begin{definition}
In the central double-row tangle, the lower auxiliary line carries the
parameter $w$ and is directed from left to right.  The upper line carries
$w^{-1}$ and is directed from right to left.  Their left endpoints are
joined by one curved medial boundary edge.  This is the left reflector
bracketed by the two marking dots in
\cite[Fig.~2(b) and Definition~4.2]{IP}, which is denoted by $e_b$.
It is an \emph{unoriented medial edge}, not one of the $L$ sites of a
transverse link pattern.

For every completed two-row loop diagram, let $\mathcal C_b$ be the interlace
containing $e_b$.  The operator $\widehat\rho_b^{(L)}$ from
\cite[Definition~4.2]{IP} restricts to configurations for which
$\mathcal C_b$ is the through-going interlace.  In the infinite Bernoulli strip,
this is the event that $e_b$ is contained in $\ga_L$.  We set
\[
  H_L=\{e_b\subseteq \ga_L\},
  \qquad
  \hp_L=\Pro_{1/2}(H_L).
\]
No orientation of $e_b$ enters either the event or the algebraic observable.
Figure~\ref{fig:reflector-convention} shows the convention used below.
\end{definition}

\begin{figure}[ht]
\centering
\begin{tikzpicture}[scale=.86,>=Latex]
  \draw[rounded corners,thick] (0,0) rectangle (6,2.2);
  \draw[thick,->] (0,.55)--(5.85,.55);
  \draw[thick,<-] (0,1.65)--(5.85,1.65);
  \draw[thick] (6,.55) .. controls (6.75,.55) and (6.75,1.65) .. (6,1.65);
  \draw[red!70!black,very thick]
      (0,.55) .. controls (-.85,.55) and (-.85,1.65) .. (0,1.65);
  \fill (-1.06,1.10) circle (2.2pt);
  \fill (.18,1.10) circle (2.2pt);
  \node[red!70!black,anchor=east] at (-1.18,1.43) {$e_b$};
  \foreach \x/\lab in {1/$z_1$,2/$z_2$,4.7/$z_{L-1}$,5.7/$z_L$}{
     \draw[thin] (\x,0)--(\x,2.2);
     \node[anchor=north] at (\x,-.04) {\lab};
  }
  \node[anchor=south] at (3,.58) {$w$};
  \node[anchor=north] at (3,1.62) {$w^{-1}$};
  \node[align=center,anchor=north] at (3,-.65)
  {lower line: left to right\qquad upper line: right to left};
\end{tikzpicture}
\caption{Convention corresponding to $\widehat\rho_b^{(L)}$.  The two black
dots bracket the curved left reflecting medial edge $e_b$.  The event is that
the edge $e_b$ is contained in the hull $\ga_L$.  The edge is not
oriented.}
\label{fig:reflector-convention}
\end{figure}

The same law is obtained from finite longitudinal strips.  Fix odd link
patterns at the two remote ends and send both lengths to infinity.
Proposition~\ref{prop:stochastic-identification} proves that the limit does not
depend on the chosen patterns.

\section{The inhomogeneous strip formula}\label{sec:qkz}

We now complete the interpolation argument for the boundary-contact
observable.  The important point is that the probability itself is a rational
function of the spectral parameters.  Interpolation must therefore be applied
after all denominators have been cleared.

We use the ordinary reflecting-boundary qKZ relations from
\cite[Sections~3.3--3.5]{IP}.  These are the exchange, reflection, and
vector-deletion relations.  We also use the character identities stated in
Lemma~\ref{lem:character-specializations}.

We follow the Dyck-path notation of de Gier and Pyatov
\cite[Figure~2]{dGP}.  Their generic semisimplicity assumption is not needed.
Lemma~\ref{lem:root-component-relation} derives the required triangular
relation directly from the link-pattern qKZ equation.

Before the root-of-unity specialization, the qKZ identities are rational
identities.  We first clear every denominator.  The resulting
Laurent-polynomial identities may then be specialized to $q=e^{2\pi i/3}$.
All later divisions are understood as identities of rational functions.  We will prove the normalization $Z_L=\chi_L$, the Bernoulli interpretation,
the local relations for $\widehat\rho_b^{(L)}$, the degree bounds, and the
corrected interpolation.

Set
\[
  q=e^{2\pi i/3},
  \qquad [z]=z-z^{-1},
\]
and
\begin{equation}\label{eq:def-k}
  k(a,b)=[qb/a][q/(ab)]
  =q^2a^{-2}+q^{-2}a^2-b^2-b^{-2}.
\end{equation}
Let $L=2m-1$ be odd.  Let $z_1,\ldots,z_L$ be the vertical spectral
parameters, and let $w$ be the horizontal parameter.  Put
\[
  x_j=z_j^2.
\]
A Laurent polynomial in $x_1,\ldots,x_L$ is a finite sum
\[
  \sum_{\boldsymbol a\in\Z^L}
  c_{\boldsymbol a}x_1^{a_1}\cdots x_L^{a_L}.
\]
Only finitely many coefficients are nonzero.  A partition of length at most
$M$ is a weakly decreasing sequence
$\lambda_1\ge\cdots\ge\lambda_M\ge0$.  We write
\begin{equation}\label{eq:def-symplectic-character}
 \operatorname{sp}_{\lambda}(y_1,\ldots,y_M)
 =\frac{
 \det\!\left(y_i^{\lambda_j+M-j+1}
             -y_i^{-\lambda_j-M+j-1}\right)_{i,j=1}^M}
 {\det\!\left(y_i^{M-j+1}-y_i^{-M+j-1}\right)_{i,j=1}^M}.
\end{equation}
The quotient is a symmetric Laurent polynomial.  For every $M\ge1$, set
\[
 \lambda_j^{(M)}=\left\lfloor\frac{M-j}{2}\right\rfloor,
 \qquad 1\le j\le M,
 \qquad
 \chi_M=\operatorname{sp}_{\lambda^{(M)}}.
\]
This fixes the character notation used throughout the paper.

Let
\begin{equation}\label{eq:downward-vector-definition}
 |\Psi_L^\downarrow(\boldsymbol z)\rangle
 =|\Psi_L(\boldsymbol z)\rangle
 :=\sum_{\alpha\in\mathrm{LP}_L}
   \psi_\alpha(z_1,\ldots,z_L)|\alpha\rangle.
\end{equation}
This is the primitive Laurent-polynomial qKZ solution.  It belongs to the
Temperley--Lieb system with reflecting boundaries.  We use the normalization
of Sections~3.4--3.6 in \cite{IP}.  Let $\bar\alpha$ be the rotation of $\alpha$
through $\pi$.  Following \cite[Section~4.1]{IP}, define
\begin{equation}\label{eq:upward-vector-definition}
 \langle\Psi_L^\uparrow(\boldsymbol z)|
 :=\sum_{\alpha\in\mathrm{LP}_L}
   \psi_\alpha(z_L,\ldots,z_1)\langle\bar\alpha|.
\end{equation}
Write
\begin{align*}
 Z_L^\downarrow(\boldsymbol x)
 &:=\sum_\alpha\psi_\alpha(z_1,\ldots,z_L)=:Z_L(\boldsymbol x),\\
 Z_L^\uparrow(\boldsymbol x)
 &:=\sum_\alpha\psi_\alpha(z_L,\ldots,z_1)
   =Z_L(x_L,\ldots,x_1).
\end{align*}
At loop weight $n=-(q+q^{-1})=1$, the diagrammatic pairing of any upward and
downward odd link patterns equals one.  Consequently the two-sided
normalization is
\begin{equation}\label{eq:def-two-sided-normalization}
 \mathcal Z_L(\boldsymbol x)
 :=\langle\Psi_L^\uparrow(\boldsymbol z)
       |\Psi_L^\downarrow(\boldsymbol z)\rangle
 =Z_L^\uparrow(\boldsymbol x)Z_L^\downarrow(\boldsymbol x).
\end{equation}

The identity $Z_L=\chi_L$ will be proved in
Proposition~\ref{prop:normalization}.
Let $\widehat\rho_b^{(L)}(w;\boldsymbol z)$ be the two-row operator from
\cite[Definition~4.2]{IP} obtained by restricting to configurations in which
$e_b$ lies on the through-going interlace.  Its normalized boundary-contact
weight is
\begin{equation}\label{eq:def-P-hat}
  \widehat P_L(w;\boldsymbol z)
  =
  \frac{\langle\Psi_L^\uparrow,
  \widehat\rho_b^{(L)}(w;\boldsymbol z)\Psi_L^\downarrow\rangle}
  {\mathcal Z_L(\boldsymbol x)}.
\end{equation}

For link patterns $\alpha$ and $\beta$, let
$\mathcal C_L(\beta,\alpha)$ be the set of two-row loop configurations with
these lower and upper connectivities.  For $C\in\mathcal C_L(\beta,\alpha)$,
let $\operatorname{wt}(C)$ be the product of its local $R$-weights.  Let
$\mathbf 1_b(C)$ be one when $e_b$ lies on the through-going interlace, and zero
otherwise.  Equivalently,
\begin{equation}\label{eq:contact-operator-entry}
 \langle\beta|\widehat\rho_b^{(L)}|\alpha\rangle
 =\sum_{C\in\mathcal C_L(\beta,\alpha)}
   \operatorname{wt}(C)\,\mathbf 1_b(C).
\end{equation}
This is a finite connectivity definition, which does not appeal to a scaling
limit.  For generic complex spectral parameters, \eqref{eq:def-P-hat} is a
normalized boundary-contact weight, not a probability.  Its probabilistic
meaning at the homogeneous point is established next.

\subsection{Homogeneous stochastic interpretation}
\label{subsec:stochastic}

Choose $w_0$ so that $w_0^2=-q$ and set $z_1=\cdots=z_L=1$.  Let
\[
 T_L=t_L(w_0;1,\ldots,1)
\]
be the double-row transfer matrix on $\mathrm{LP}_L$.  The entry
$(T_L)_{\beta,\alpha}$ is the total weight of configurations that carry the
lower link pattern $\alpha$ to the upper link pattern $\beta$.

\begin{lemma}
\label{lem:stochastic-ergodic}
The matrix $T_L$ is column stochastic, irreducible, and aperiodic.
Consequently it has a unique stationary probability vector $\pi_L$, and
$T_L^N\mu\to\pi_L$ for every initial probability vector $\mu$ on
$\mathrm{LP}_L$.
\end{lemma}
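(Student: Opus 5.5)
The plan is to read $T_L$ probabilistically and then apply the Perron--Frobenius theorem for finite Markov chains.

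\emph{Column stochasticity.} At $w_0^2=-q$ with $q=e^{2\pi i/3}$ and $z_j=1$, each local $R$-matrix face weight becomes $\tfrac12$ on each of its two Temperley--Lieb tiles. This is the isotropic point of critical bond percolation, where $p=1/2$. The boundary reflection weights become $1$ or are likewise split in half, as in \cite[Section~3]{IP}. Thus every entry $(T_L)_{\beta,\alpha}$ is a sum of nonnegative weights. Fix $\alpha$. Sum $(T_L)_{\beta,\alpha}$ over all $\beta$. This is the same as summing over all tile assignments of the double row, since every assignment produces exactly one upper connectivity. Closed loops carry weight $n=1$. The total is therefore the product of the local sums, each equal to $1$. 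Hence $\sum_\beta (T_L)_{\beta,\alpha}=1$. I will check this factorisation directly from the definition of $t_L(w;\boldsymbol z)$, including the two boundary tangles.

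\emph{Irreducibility.} I will use the Temperley--Lieb generators. Fix one double-row configuration: choose a single face in the lower row at columns $(j,j+1)$ with the tile producing $e_j$. Choose all remaining faces so that the row acts as the identity on connectivities (vertical strands). Choose the boundary tangles so that they do not change the pattern. This configuration has positive weight $2^{-2L}$ (times boundary factors), so $T_L$ dominates a positive multiple of the action of each $e_j$. The same holds for the boundary generator coming from the free or wired reflecting edge, which moves the defect. It also holds for the identity itself. The remaining step is the standard fact that the odd link patterns form a single orbit under the reflecting Temperley--Lieb monoid generated by $e_1,\dots,e_{L-1}$ and the boundary generator. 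To prove it, I will show that every $\alpha$ can be sent to a fixed reference pattern $\alpha_0$: nested arcs with the defect at site $1$, say. I will then show that $\alpha_0$ can be sent to every $\beta$.
\begin{itemize}
\item The map $\alpha\mapsto\alpha_0$: acting by $e_j$ on the sites of an arc and a neighbour rearranges arcs. Moving the defect uses $e_j$ applied to the defect and an adjacent endpoint. Induction on the number of arcs not in the reference position gives the required sequence.
\item The map $\alpha_0\mapsto\beta$: any $\beta$ is $e_{j_1}\cdots e_{j_k}\alpha_0$ for a suitable word. This follows by building the arcs of $\beta$ innermost first.
\end{itemize}
This combinatorial orbit statement is the main obstacle. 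It is easy to get wrong near the defect and the two boundaries. I will first try a direct induction on $L$, peeling off the arc adjacent to the boundary.

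\emph{Aperiodicity and conclusion.} The identity configuration above gives $(T_L)_{\alpha,\alpha}>0$ for every $\alpha$. An irreducible chain with a positive diagonal entry is aperiodic. $\mathrm{LP}_L$ is finite, so the Perron--Frobenius theorem for primitive stochastic matrices applies. It gives a unique stationary probability vector $\pi_L$, and $T_L^N\mu\to\pi_L$ for every initial probability vector $\mu$.
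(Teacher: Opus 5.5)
Your plan is correct and has the same three-part structure as the paper's proof:
\begin{itemize}
\item local weights $1/2$ with loop weight one give column stochasticity;
\item positive-probability double rows realizing each $e_j$ give irreducibility;
\item the all-straight double row gives a positive diagonal, hence aperiodicity.
\end{itemize}

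The only real divergence is the transitivity of the Temperley--Lieb action on $\mathrm{LP}_L$. You flag it as the main obstacle and plan an induction through a reference pattern $\alpha_0$. The paper needs no induction. For given $\alpha,\beta$ it takes the tangle $D_{\beta,\alpha}=\beta\circ\alpha^*$. Since $\alpha^*\alpha$ is one through strand plus loops of weight one, $D_{\beta,\alpha}\alpha=\beta$. It then uses only the standard fact that every Temperley--Lieb diagram is a word in the $e_i$. This treats the defect and both boundaries uniformly, so the delicate cases you anticipate never arise.

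Your route also works and can be shortened. The commuting product $e_2e_4\cdots e_{L-1}$ sends every $\alpha$, in one step, to the pattern with defect at $1$ and arcs $(2,3),\ldots,(L-1,L)$. The return trip from this $\alpha_0$ to $\beta$ can then be taken as $\beta\circ\alpha_0^*$.

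Two small corrections:
\begin{itemize}
\item Drop the ``boundary generator.'' The reflecting boundaries here contribute no extra Temperley--Lieb generator. None is needed, since $e_j$ acting on the defect and a neighbouring arc endpoint already moves the defect, as you say.
\item A single row of uniform tiles does not act as the identity; only the all-straight \emph{double} row does. Flipping one lower-row face next to sites $j,j+1$ in that double row yields exactly $e_j$, with weight $2^{-2L}$.
\end{itemize}
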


\begin{proof}
At $z=1$ and $w=w_0$, the two coefficients in a local $R$-tile are
\[
 \frac{[q/w_0]}{[qw_0]}
 \quad\text{and}\quad
 \frac{[1/w_0]}{[qw_0]}.
\]
Since $w_0^2=-q$ and $q^3=1$, one has
\[
 [q/w_0]=[1/w_0]=qw_0,
 \qquad [qw_0]=2qw_0.
\]
Thus each local resolution has weight $1/2$.  At loop weight
$-(q+q^{-1})=1$, contractible loops carry weight one.  Summing over all local
resolutions in a double row therefore gives total mass one for every starting
link pattern.  Thus $T_L$ is column stochastic.

Next, we prove irreducibility directly.  Regard an odd link pattern $\alpha$
as a planar diagram in the Temperley--Lieb algebra.  It joins one defect
endpoint to the $L$ endpoints of a transverse cut.  Let $\alpha^*$ be its vertical
reflection.  For any
$\alpha,\beta\in\mathrm{LP}_L$, the planar tangle
\[
 D_{\beta,\alpha}=\beta\circ\alpha^*
\]
has $L$ lower and $L$ upper endpoints and satisfies
\[
 D_{\beta,\alpha}\,\alpha
 =\beta\circ(\alpha^*\alpha)=\beta,
\]
because $\alpha^*\alpha$ consists of one through interlace and contractible
loops, all of weight one.  Put $D_{\beta,\alpha}$ in Morse position and follow
it from bottom to top.  Insert identity strands and delete contractible loops.
The result is a finite word in the elementary Temperley--Lieb generators
$e_i$ and the identity.  Each $e_i$ is realized by a fixed finite stack of double rows.  Choose the
turning resolution in the two squares next to sites $i,i+1$, and choose the
straight resolution in every other square.  All prescribed local
resolutions have probability $1/2$, so the resulting slab has strictly
positive probability.  Concatenating the slabs realizing the word for
$D_{\beta,\alpha}$ gives $(T_L^N)_{\beta,\alpha}>0$ for some $N$.
Thus $T_L$ is irreducible.

Finally, choosing the straight resolution in every square of a double row
leaves every link pattern unchanged and has positive probability.  Thus every
state has a self-loop, so the chain is aperiodic.  The last assertion is the
standard convergence theorem for a finite irreducible aperiodic Markov chain.
\end{proof}

\begin{proposition}
\label{prop:stochastic-identification}
At the homogeneous point,
\begin{equation}\label{eq:stochastic-identification}
 \pi_L(\alpha)
 =\frac{\psi_\alpha(1,\ldots,1)}{Z_L(1,\ldots,1)},
 \qquad
 \hp_L=\widehat P_L(w_0;1,\ldots,1).
\end{equation}
The second equality is independent of the link patterns imposed at the two
remote ends in the finite-volume approximation of the strip.
\end{proposition}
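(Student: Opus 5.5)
The first identity will follow from the qKZ eigenvector property combined with Lemma~\ref{lem:stochastic-ergodic}. The reflecting-boundary qKZ system of \cite[Sections~3.3--3.5]{IP} implies, at $q^3=1$, that $|\Psi_L(\boldsymbol z)\rangle$ is an eigenvector of the double-row transfer matrix $t_L(w;\boldsymbol z)$ for every $w$. This comes from the standard train argument: the exchange relations move the $R$-matrices through, and the reflection relations absorb the boundary $K$-matrices. I will clear denominators first, so that the eigenvalue relation is an identity of Laurent polynomials. It can then be specialized to $w=w_0$ and $z_j=1$, giving
\[
T_L\Psi_L(1,\ldots,1)=\Lambda\,\Psi_L(1,\ldots,1).
\]
Column stochasticity means $\mathbf 1^{\mathsf T}T_L=\mathbf 1^{\mathsf T}$. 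Pairing the eigenvector equation with $\mathbf 1$ then gives $\Lambda Z_L(1,\ldots,1)=Z_L(1,\ldots,1)$.

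Two things must be checked here. First, $Z_L(1,\ldots,1)\neq 0$. I plan to use $Z_L=\chi_L$ from Proposition~\ref{prop:normalization}, where the character evaluated at $1$ is a positive dimension. Second, the homogeneous specialization of $\Psi_L$ has nonnegative entries. This will follow from Perron--Frobenius once the vector is known to be nonzero. An eigenvector with eigenvalue $1$ of an irreducible stochastic matrix is a multiple of $\pi_L$, by uniqueness in Lemma~\ref{lem:stochastic-ergodic}. Normalizing by the sum $Z_L(1,\ldots,1)$ therefore yields the first formula. The same argument applies to $\Psi^\uparrow_L$ via the rotation $\alpha\mapsto\bar\alpha$, which relates the upward chain to the downward one by the symmetry of the strip.

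For the second identity, I will work in a finite strip of length $N_1$ below the central double row and $N_2$ above it, with boundary patterns $\alpha_0$ at the bottom and $\beta_0$ at the top. The Bernoulli law of the bond configuration is the product of local resolutions, each of weight $1/2$, because bond percolation at $p=1/2$ corresponds to the $R$-tile weights computed in the proof of Lemma~\ref{lem:stochastic-ergodic}. The connectivity below the central row is then distributed as $T_L^{N_1}\delta_{\alpha_0}$. Since the pairing has loop weight one, the connectivity above the row, read from the top down, is distributed as the analogous chain started at $\bar\beta_0$.

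Conditional on the lower pattern $\alpha$ and upper pattern $\beta$, the event that $e_b$ lies on the through-going interlace is a function of the central double row, the lower pattern and the upper pattern. Its probability is the matrix element of $\widehat\rho_b^{(L)}$ paired against $\langle\bar\beta|$, because the rows are independent. Hence
\[
\Pro(H_L^{N_1,N_2})=\sum_{\alpha,\beta}(\text{up law})(\beta)\,\langle\bar\beta|\widehat\rho_b^{(L)}|\alpha\rangle\,(T_L^{N_1}\delta_{\alpha_0})(\alpha).
\]
Letting $N_1,N_2\to\infty$ and using Lemma~\ref{lem:stochastic-ergodic} for both chains gives the pairing of the two stationary vectors against $\widehat\rho_b^{(L)}$. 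This limit does not depend on $\alpha_0$ or $\beta_0$. By the first part it equals $\widehat P_L(w_0;1,\ldots,1)$, because the denominator in \eqref{eq:def-P-hat} factors as $Z_L^\uparrow Z_L^\downarrow$ by \eqref{eq:def-two-sided-normalization}.

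Finally, the infinite-strip event $H_L$ must be identified with this limit. The interlace through $e_b$ being the through-going one in the finite picture coincides with $\{e_b\subseteq\ga_L\}$ once both ends are sent to infinity. A cylinder-event approximation, with the finite-length events converging in probability, should suffice.

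The main obstacle I expect is the eigenvector step, namely specializing a rational qKZ identity safely at $q^3=1$, $w=w_0$ and $z=1$ without hitting a zero of the cleared denominator or losing nonvanishing. I would first try to use the Laurent-polynomial form of the exchange relations and check directly that the denominators $[qw_0]=2qw_0$ are nonzero. A secondary point is verifying that the reversed-order upward vector $\psi_\alpha(z_L,\ldots,z_1)$ at $z=1$ is genuinely the stationary law of the upward chain. At the homogeneous point this reduces to the rotation symmetry $\alpha\mapsto\bar\alpha$ of $T_L$.
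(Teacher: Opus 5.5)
Your proposal is correct and follows essentially the same route as the paper. Both identify $\Psi_L(\mathbf 1)$, and its rotated counterpart, with the unique stationary vector of the ergodic chain $T_L$ from Lemma~\ref{lem:stochastic-ergodic}, then sandwich the block containing $e_b$ between long stacks of ordinary double rows so that the end patterns are forgotten and the limit is the normalized pairing in \eqref{eq:def-P-hat}. The only differences are minor: you derive the eigenvalue $1$ from column stochasticity together with $Z_L(\mathbf 1)=\chi_L(1^L)>0$ instead of citing \cite[(3.9)]{IP}, which also makes explicit the nonvanishing of $\Psi_L(\mathbf 1)$ that the paper uses tacitly, and your final passage to the infinite-strip event is as brief as the paper's (it can be made rigorous by noting that a slab realizing the rank-one tangle $\beta\circ\alpha^*$ occurs with positive probability, hence with probability tending to one on both sides of the central block, after which the finite- and infinite-volume events coincide).
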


\begin{proof}
The transfer-matrix eigenvalue equation is
\[
 T_L\Psi_L(1,\ldots,1)=\Psi_L(1,\ldots,1);
\]
see \cite[(3.9)]{IP}.  By Lemma~\ref{lem:stochastic-ergodic}, the eigenspace at
eigenvalue one is one-dimensional.  The sum of the lower components is $Z_L^\downarrow$.
Normalization gives the first identity in
\eqref{eq:stochastic-identification}.  It also shows that the normalized
homogeneous components are nonnegative.  The rotated transfer matrix has the corresponding stationary
law
\[
 \pi_L^\uparrow(\bar\alpha)
 =\frac{\psi_\alpha(1,\ldots,1)}{Z_L^\uparrow(\mathbf1)}.
\]
At the homogeneous point $Z_L^\uparrow(\mathbf1)
=Z_L^\downarrow(\mathbf1)$.

Place the two-row block containing $e_b$ between $N$ ordinary double rows
below and $N$ ordinary double rows above.  Fix arbitrary odd link patterns at
the two remote ends.  The bonds in the lower part, the central block, and the
upper part are independent.  The link patterns induced at the two cuts next
to the central block have laws obtained by applying $T_L^N$
and its rotated counterpart to the chosen end states.  Lemma
\ref{lem:stochastic-ergodic} shows that these laws converge to
$\pi_L^\downarrow$ and $\pi_L^\uparrow$, independently of the end states.
Therefore the limiting probability of the event $e_b\subseteq\ga_L$ is
\[
 \sum_{\alpha,\beta}
 \pi_L^\uparrow(\beta)
 \langle\beta|\widehat\rho_b^{(L)}(w_0;\mathbf1)|\alpha\rangle
 \pi_L^\downarrow(\alpha).
\]
At loop weight one, every closed loop created by gluing has weight one.  The
only nonclosed interlace is the through-going interlace.  In the infinite strip it
becomes $\ga_L$.  Substituting the two stationary laws turns the last
display into the quotient in
\eqref{eq:def-P-hat}.  By
\eqref{eq:contact-operator-entry}, the matrix entry is the conditional
probability that $e_b$ lies on the through-going interlace.  The finite-volume
measures converge to the Bernoulli product measure on the infinite strip.
Hence the limit is
$\Pro_{1/2}(H_L)=\hp_L$.
\end{proof}

\subsection{Local identities for the boundary-contact event}

We first express the restriction to the event $e_b\subseteq\ga_L$ in the
Temperley--Lieb algebra.  Let
$\mathcal T_L^\bullet$ be spanned by tangles in the strip rectangle together
with the distinguished left reflecting edge $e_b$.  Close a basis tangle $D$
with an upper link pattern $\beta$ and a lower link pattern $\alpha$.
Because $L$ is odd, the closure has exactly one interlace joining the two ends of
the strip.  We call it the through-going interlace.  Define the boundary-contact
functional
\begin{equation}\label{eq:tagged-closure-functional}
\begin{aligned}
 \Lambda_b^{\beta,\alpha}(D)
 &=n^{\ell(D)}
  \mathbf 1\{e_b\text{ lies on the through-going interlace}\},\\
 n&=-(q+q^{-1}).
\end{aligned}
\end{equation}
Here $\ell(D)$ is the number of contractible loops created by the closure.
Extend $\Lambda_b^{\beta,\alpha}$ linearly.  The loop-removal relation is
included in \eqref{eq:tagged-closure-functional}.  Hence the functional is
well defined on the diagram algebra.  The edge $e_b$ remains part of the
exterior connectivity.  In particular,
\[
 \langle\beta|\widehat\rho_b^{(L)}|\alpha\rangle
 =\Lambda_b^{\beta,\alpha}(\rho_L),
\]
where $\rho_L$ denotes the full two-row tangle before the
functional is applied.

Let $\varphi_i:V_{L-2}\to V_L$ insert the short Temperley--Lieb cup that
joins sites $i$ and $i+1$.  Let
$\varphi_i^\dagger:V_L\to V_{L-2}$ be the corresponding cap.  Thus
\[
 e_i=\varphi_i\varphi_i^\dagger,
 \qquad
 \varphi_i^\dagger\varphi_i=n\,\mathrm{Id};
\]
at the percolation value $n=1$, the latter factor is one.  The symbol
$\dagger$ denotes this cap operation, not a Hilbert-space adjoint.

Figure~\ref{fig:contact-locality} shows the point used below.  The edge $e_b$
lies outside the disk in which a Yang--Baxter, reflection, or deletion identity
is applied.

\begin{lemma}
\label{lem:contact-local-identities}
As identities of rational operator-valued functions, the operator
$\widehat\rho_b^{(L)}$ satisfies the following relations.

\begin{enumerate}\renewcommand{\labelenumi}{(\roman{enumi})}
\item For $1\le i<L$,
\begin{equation}\label{eq:contact-interlacing}
 \check R_i(z_i/z_{i+1})
 \widehat\rho_b^{(L)}(w;\ldots,z_i,z_{i+1},\ldots)
 =
 \widehat\rho_b^{(L)}(w;\ldots,z_{i+1},z_i,\ldots)
 \check R_i(z_i/z_{i+1}).
\end{equation}

\item The right-end reflection identity holds:
\begin{equation}\label{eq:contact-reflection-right}
 \widehat\rho_b^{(L)}(w;z_1,\ldots,z_{L-1},z_L)
 =\widehat\rho_b^{(L)}(w;z_1,\ldots,z_{L-1},z_L^{-1})
\end{equation}
inside matrix elements between the reflecting qKZ boundary states.

\item At the last pair of columns, which is disjoint from the left reflecting
boundary edge, one has at $z_L=qz_{L-1}$
\begin{equation}\label{eq:contact-operator-deletion}
 \widehat\rho_b^{(L)}(w;z_1,\ldots,z_{L-1},qz_{L-1})
 \circ\varphi_{L-1}
 =
 \varphi_{L-1}\circ
 \widehat\rho_b^{(L-2)}(w;z_1,\ldots,z_{L-2}).
\end{equation}
\end{enumerate}
\end{lemma}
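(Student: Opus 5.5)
The approach is a locality argument.  Each of the three relations is, for the unrestricted two-row operator $\rho_L$, a standard identity that \cite{IP} proves by a diagrammatic move supported in a disk.  Interlacing uses the Yang--Baxter move: the $\check R_i$ tile is pushed through the two auxiliary lines, crossing columns $i,i+1$.  Reflection uses the boundary Yang--Baxter (reflection) equation at the right reflecting end.  Deletion uses the fact that at $z_L=qz_{L-1}$ the $R$-matrix $\check R$ at spectral ratio $q^{-1}$ (equivalently its crossing specialization) degenerates to a multiple of $e_{L-1}$.  This lets a cup inserted at sites $L-1,L$ be slid up through the double row.  In every case I will check that the disk $U$ supporting the move is disjoint from $e_b$: columns $i,i+1$ and the right end are interior or at the far side, while $e_b$ is the curved left medial edge joining the left endpoints of the auxiliary lines.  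The plan is to show that $\Lambda_b^{\beta,\alpha}$ factors through connectivity outside $U$, so that any identity of tangles inside $U$ persists after the restriction to $H_L$.

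\textbf{Step 1 (locality of the functional).}  Take two tangles $D,D'$ in $\mathcal T_L^\bullet$ that agree outside $U$.  Suppose their parts inside $U$ are linked by a relation $\sum c_k E_k=\sum c'_k E'_k$ in the Temperley--Lieb algebra with boundary points $\partial U$.  That relation is written in basis diagrams of $U$, where loops inside $U$ have already been removed with weight $n$.  For each basis diagram $E$ on $U$, the closure of $D$ with $E$ inserted, together with $\beta$ and $\alpha$, has loops and a through-going interlace determined only by the pairing $E$ of the points of $\partial U$ and by the exterior.  The edge $e_b$ lies in the exterior.  So whether $e_b$ lies on the through-going interlace is a function of $E$ and the exterior data alone, and $\ell(D)$ is likewise additive.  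Hence $\Lambda_b^{\beta,\alpha}$ is linear in the $U$-component, and every local identity passes through it.  Since this holds for all $\alpha,\beta$, the identities hold entrywise via \eqref{eq:contact-operator-entry}.

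\textbf{Step 2 (items (i) and (iii)).}  For (i), I will apply the Yang--Baxter equation twice, once with the lower line carrying $w$ and once with the upper line carrying $w^{-1}$, both inside a disk around columns $i,i+1$.  This is a rational identity, so I will clear denominators first as the section prescribes.  For (iii), I will use the fusion/degeneration $\check R(q^{-1})\propto e$ and the resulting push-through relation for the pair of columns $L-1,L$ together with the right reflection.  After the cup is moved through the double row, the strands at columns $L-1,L$ form a cup above.  What remains is the $(L-2)$-column operator.  No closed loop or factor of $n$ is created beyond $n=1$, because the two tiles straighten out into the cup.  The left edge $e_b$ and its relation to the through-going interlace are unchanged, since inserting a cup at the right does not alter which strand $e_b$ lies on.  Concretely, $\varphi_{L-1}$ on the left and the right sides agree as tangles outside the right-hand disk.

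\textbf{Step 3 (item (ii)).}  Item (ii) only holds inside matrix elements between qKZ states.  I will use the reflection relation of the qKZ vector, $\Psi_L(\ldots,z_L^{-1})=K_L(z_L)\Psi_L(\ldots,z_L)$, with the analogous relation for the upward vector.  Combining it with the boundary Yang--Baxter equation for the right reflector of the double row moves the $K$-matrix through the operator.  The boundary Yang--Baxter move is again supported near the right end and away from $e_b$, so Step 1 applies.

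The main obstacle is Step 1's claim that restriction to the through-going interlace commutes with local relations.  The point to verify is that a local move can reconnect strands, so the interlace through $e_b$ could change.  The argument is that identities are compared basis diagram by basis diagram on $\partial U$, so both sides are computed with the same exterior pairing.  I expect the deletion case (iii) to need the most care, because of the change of $L$ and the verification that the normalizing scalar of the degenerate $R$-matrix equals one at $n=1$.
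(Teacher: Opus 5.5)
Your proposal is correct and is essentially the paper's proof. Both clear denominators, keep $e_b$ in the exterior tangle, and use that $\Lambda_b^{\beta,\alpha}$ sees a local disk only through its boundary pairing, so the Yang--Baxter, right-reflection, and cup push-through moves (all supported away from the left reflector) survive the restriction to $e_b\subseteq\gamma_L$. Two cosmetic points: in the paper's normalization the degenerate value is $\check R(q)=e$ (while $\check R(q^{-1})$ has a pole), and (iii) needs no right reflection, because at $z_L=qz_{L-1}$ the cup passes through each of the two rows separately by the crossing relation with scalar exactly one.
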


\begin{proof}
Clear the common scalar denominators in the Yang--Baxter, unitarity,
crossing, and reflecting-boundary identities.  Each side is then a finite
linear combination of Temperley--Lieb basis tangles.  The coefficients agree
for every exterior connectivity.
Retain $e_b$ as part of the exterior tangle.  Apply
$\Lambda_b^{\beta,\alpha}$ to both sides.  This functional indicates only
whether $e_b$ lies on the through-going interlace.  It therefore preserves each
identity within every connectivity class.

For interlacing, apply the boundary-contact functional to the usual Yang--Baxter
calculation for the two-row tangle.  When the first two columns are involved,
distinguish configurations according to which exposed interlace is connected to \(e_b\).  The Yang--Baxter identity
remains valid in each resulting connectivity class.  This gives
\eqref{eq:contact-interlacing}.  Applying the same argument to the right
reflection disk, which is disjoint from $e_b$, gives
\eqref{eq:contact-reflection-right}.

For deletion, compose the last two columns with $\varphi_{L-1}$ and set
$z_L=qz_{L-1}$.  The crossing and unitarity identities reduce that block to
$\varphi_{L-1}$ followed by the two-row tangle with $L-2$ columns.  This is the
same reduction as in \cite[Lemma~3.3]{IP}.  The reduction disk is disjoint
from $e_b$.  Moreover, the removed short arc is contractible.  It cannot be
the through-going interlace selected by
\eqref{eq:tagged-closure-functional}.  The event selected by the functional
and the loop factor are unchanged by the reduction.  This proves
\eqref{eq:contact-operator-deletion}.

All statements have been proved after common denominators were cleared.
They consequently hold as rational identities and may be specialized to
$q^3=1$.
\end{proof}

\begin{figure}[ht]
\centering
\begin{tikzpicture}[scale=.82,>=Latex]
  \draw[very thick] (-.25,-.1)--(-.25,2.7);
  \fill[red!70!black] (-.25,1.3) circle (2.2pt);
  \node[anchor=east] at (-.38,1.3) {$e_b$};
  \draw[rounded corners,thick] (0,0) rectangle (5,2.6);
  \draw[thick,->] (.2,.65)--(4.8,.65);
  \draw[thick,<-] (.2,1.95)--(4.8,1.95);
  \draw[thick] (1.65,0)--(1.65,2.6);
  \draw[thick] (3.35,0)--(3.35,2.6);
  \draw[dashed,rounded corners,thick] (1.05,.28) rectangle (3.95,2.32);
  \node[align=center] at (2.5,1.3) {local Yang--Baxter\\or deletion disk};
  \node[align=center] at (2.5,-.42) {$e_b$ is part of the fixed exterior tangle};
\end{tikzpicture}
\caption{Locality behind Lemma~\ref{lem:contact-local-identities}.  The edge
$e_b$ lies outside the disk where the local identity is applied.  Restricting
to configurations in which $e_b$ lies on the through-going interlace is a linear
operation.
It therefore preserves the local identity.}
\label{fig:contact-locality}
\end{figure}

In column $j$, the two $R$-tiles have common denominator
\[
  [qw/z_j][qwz_j]=k(1/w,z_j).
\]
Define
\begin{equation}\label{eq:def-KL}
  K_L(w;\boldsymbol z)=\prod_{j=1}^L k(1/w,z_j)
\end{equation}
and the cleared boundary-contact operator
\[
  \widetilde\rho_L=K_L\widehat\rho_b^{(L)}.
\]
The cleared numerator is
\begin{equation}\label{eq:def-NL}
  N_L(\boldsymbol x;w)
  :=\langle\Psi_L^\uparrow,
  \widetilde\rho_L(w;\boldsymbol z)\Psi_L^\downarrow\rangle.
\end{equation}
Equivalently,
\begin{equation}\label{eq:N-D-P}
  N_L(\boldsymbol x;w)
  =D_L(\boldsymbol x;w)\widehat P_L(w;\boldsymbol z),
  \qquad
  D_L=\mathcal Z_L(\boldsymbol x)K_L(w;\boldsymbol z).
\end{equation}
Unlike $\widehat P_L$, the function $N_L$ is a Laurent polynomial.  The
quantities above have different roles.  The polynomial $Z_L$ is the one-sided
qKZ normalization.  The product $\mathcal Z_L=Z_L^\uparrow Z_L^\downarrow$
is the two-sided normalization.  The factor $K_L$ clears the denominators of
the two-row operator.  Thus $D_L=\mathcal Z_LK_L$ clears all denominators in
$\widehat P_L$, and $N_L=D_L\widehat P_L$ is the Laurent polynomial that will
be interpolated.

\subsection{Coordinatewise Laurent support of the qKZ vector}

For $d\ge0$, let
\[
  \mathcal L_d^{(L)}
  =\operatorname{span}\left\{
  x_1^{a_1}\cdots x_L^{a_L}:
  -d\le a_j\le d\text{ for every }j
  \right\}.
\]
Thus $f\in\mathcal L_d^{(L)}$ means simply that the exponent of each
variable lies between $-d$ and $d$.  The notation $\mathcal L_d^{(L)}$ does
not denote a special family of polynomials.

Let $s_i$ interchange $x_i$ and $x_{i+1}$.  Define
\begin{equation}\label{eq:def-Si}
  \mathcal S_i f
  =\frac{qx_i-q^{-1}x_{i+1}}{x_i-x_{i+1}}(f-s_if).
\end{equation}
After the change from the variables in \cite[(3.8)]{dGP} to
$x_j=z_j^2$, this is the operator $h_i(1)$ in the component form of the qKZ
equation.

\begin{lemma}\label{lem:Si-support}
For every $d\ge0$ and every $i$,
\[
  \mathcal S_i\mathcal L_d^{(L)}\subseteq\mathcal L_d^{(L)}.
\]
\end{lemma}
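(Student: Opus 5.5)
The plan is to reduce the claim to a two-variable computation on monomials and check the exponent ranges by hand. Because $\mathcal S_i$ is linear and $\mathcal L_d^{(L)}$ is spanned by monomials, it suffices to check $\mathcal S_i$ on a single monomial $x_1^{a_1}\cdots x_L^{a_L}$ with every $|a_j|\le d$. The transposition $s_i$ and the prefactor in \eqref{eq:def-Si} involve only $x_i$ and $x_{i+1}$. The remaining factor $\prod_{j\ne i,i+1}x_j^{a_j}$ therefore passes through $\mathcal S_i$ unchanged, and its exponents stay in $[-d,d]$. Writing $x=x_i$, $y=x_{i+1}$, $a=a_i$, $b=a_{i+1}$, it remains to show that
\[
 \frac{qx-q^{-1}y}{x-y}\,\bigl(x^ay^b-x^by^a\bigr)
\]
is a Laurent polynomial in $x,y$ whose exponents lie in $[-d,d]$ in each variable.

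If $a=b$, the expression vanishes. Swapping $a$ and $b$ only changes its sign, so we may assume $a>b$. Factor out $(xy)^b$ to get
\[
 x^ay^b-x^by^a=(xy)^b\bigl(x^{a-b}-y^{a-b}\bigr)
 =(xy)^b(x-y)\sum_{k=0}^{a-b-1}x^ky^{a-b-1-k}.
\]
The division by $x-y$ is therefore exact, and the quotient is
\[
 (xy)^b\sum_{k=0}^{a-b-1}x^ky^{a-b-1-k}.
\]
In this quotient, the exponent of each of $x$ and $y$ ranges over $[b,a-1]$. Multiplying by $qx-q^{-1}y$ raises one exponent by at most one, so each exponent now lies in $[b,a]$. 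Since $-d\le b<a\le d$, the result lies in $\mathcal L_d^{(L)}$.

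There is no real obstacle. The one point needing care is that the rational expression in \eqref{eq:def-Si} is in fact a Laurent polynomial. This is exactly the divisibility of the antisymmetric combination $f-s_if$ by $x_i-x_{i+1}$, made explicit above through the geometric-sum factorization. The argument uses no property of $q$, so it holds both generically and at $q=e^{2\pi i/3}$.
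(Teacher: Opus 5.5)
Your proposal is correct and follows essentially the same route as the paper: reduce to the two-variable monomial $x^ay^b$, write the divided difference as an explicit geometric sum whose exponents lie in $[b,a-1]$, and note that multiplying by $qx-q^{-1}y$ keeps every exponent in $[b,a]\subseteq[-d,d]$. Your reduction of the case $b>a$ by swapping the exponents, which only changes the sign, is if anything slightly cleaner than the paper's ``interchange $X$ and $Y$'', since the prefactor is not symmetric in $x$ and $y$.
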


\begin{proof}
It suffices to use the two variables $X=x_i$, $Y=x_{i+1}$ and a Laurent
monomial $X^aY^b$.  If $a=b$, its image is zero.  If $a>b$, then
\[
  \frac{X^aY^b-X^bY^a}{X-Y}
  =\sum_{r=0}^{a-b-1}X^{a-1-r}Y^{b+r}.
\]
After multiplication by $qX-q^{-1}Y$, every exponent of $X$ and every
exponent of $Y$ lies between $b$ and $a$.  The case $b>a$ follows by
interchanging $X$ and $Y$.  Hence exponents initially in $[-d,d]$ remain in
that interval, and linearity proves the claim.
\end{proof}

We next derive the triangular component relation directly at the root of
unity.  This avoids the generic semisimplicity assumption used in
\cite{dGP}.
Identify odd link patterns with the Dyck paths used in
\cite[Figure~2]{dGP}.  Let $\Omega_L$ be the maximal path and let
$A(\alpha)$ be the number of elementary tiles between $\Omega_L$ and
$\alpha$.

\begin{lemma}
\label{lem:root-component-relation}
Suppose that $\alpha$ has a local maximum at positions $i,i+1$, equivalently
$e_i|\alpha\rangle=|\alpha\rangle$ at loop weight one.  Then
\begin{equation}\label{eq:path-triangular}
 \mathcal S_i\psi_\alpha
 =\sum_{\substack{\beta\ne\alpha\\e_i\beta=\alpha}}\psi_\beta
 =\psi_{\alpha^-}+\sum_{k\ge1}\psi_{\alpha^{+k}}.
\end{equation}
Here $\alpha^-$ is obtained by lowering the local maximum by two units, when
that operation gives an admissible path.  The remaining preimages
$\alpha^{+k}$ are obtained by raising the segment between the maximum and a
neighbouring return at the same height, as in \cite[Figure~2]{dGP}.  In
particular,
\[
 A(\alpha^-)=A(\alpha)+1,
 \qquad
 A(\alpha^{+k})<A(\alpha)<A(\alpha^-).
\]
\end{lemma}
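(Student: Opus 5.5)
The plan is to read off \eqref{eq:path-triangular} from the component form of the exchange relation in the qKZ system, working with cleared denominators so that $q=e^{2\pi i/3}$ may be substituted directly. The vector form of the exchange relation from \cite[Sections~3.3--3.5]{IP} is
\[
 \check R_i(z_i/z_{i+1})\,\Psi_L(\ldots,z_i,z_{i+1},\ldots)=\Psi_L(\ldots,z_{i+1},z_i,\ldots).
\]
Write $\check R_i(u)=a(u)\,\mathrm{Id}+b(u)\,e_i$ with the usual rational coefficients. Multiply through by the common denominator $[qz_i/z_{i+1}]$, which gives a Laurent polynomial identity valid for all $q$. Then specialize to $q^3=1$, where $n=1$.

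Next, expand the identity in the basis $|\beta\rangle$. For fixed $\alpha$ with a local maximum at $i,i+1$, we have $e_i|\alpha\rangle=|\alpha\rangle$. Taking the $\alpha$-coefficient, the right side contributes $s_i\psi_\alpha$. The left side contributes $a\,\psi_\alpha$ plus $b$ times the sum of $\psi_\beta$ over all $\beta$ with $e_i\beta=\alpha$. That sum includes $\beta=\alpha$ itself, with coefficient $n=1$.

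Solving for the sum over $\beta\ne\alpha$ gives
\[
 \sum_{\beta\ne\alpha,\ e_i\beta=\alpha}\psi_\beta=\frac{s_i\psi_\alpha-(a+b)\psi_\alpha}{b}.
\]
I expect the rational coefficients to simplify at $q^3=1$, after the change of variables $x_j=z_j^2$ and in the normalization of \cite{IP}, so that this becomes exactly $\mathcal S_i\psi_\alpha$ from \eqref{eq:def-Si}. This is the same as identifying $h_i(1)$ in \cite[(3.8)]{dGP}. A useful consistency check is that the left side must vanish when $\psi_\alpha$ is $s_i$-symmetric. The identity $a+b=1$ (stochasticity at the homogeneous point) suggests the right combination, namely $(qx_i-q^{-1}x_{i+1})/(x_i-x_{i+1})$ as the ratio $1/b$ up to sign. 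This constant-chasing is the main obstacle. The sign and normalization conventions of the $R$-matrix in \cite{IP} must match \eqref{eq:def-Si}, and if they do not, $\mathcal S_i$ must be rescaled accordingly.

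The second part is combinatorial: classifying the preimages $\beta\ne\alpha$ of $\alpha$ under $e_i$. This follows the picture in \cite[Figure~2]{dGP}. In link-pattern language, $e_i\beta=\alpha$ means that $\beta$ is obtained from $\alpha$ by cutting the small arc $(i,i+1)$ and reconnecting $i$ and $i+1$ to the partners of some other arc or defect, keeping planarity.

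There are two kinds of such $\beta$:
\begin{itemize}
\item If $i,i+1$ attach to the two endpoints of the arc immediately enclosing $(i,i+1)$, the result in Dyck language is lowering the peak by two. This gives $\alpha^-$, which removes one tile, so $A(\alpha^-)=A(\alpha)+1$. It is admissible only when such an enclosing arc exists; otherwise $\alpha^-$ would go below height zero or past the defect.
\item If instead they attach to arcs at the same nesting level adjacent to $(i,i+1)$, or to the defect line, the result raises the segment between the peak and a neighbouring return at equal height. These are the $\alpha^{+k}$. Each strictly adds tiles toward $\Omega_L$, so $A(\alpha^{+k})<A(\alpha)$.
\end{itemize}

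Because distinct $\beta$ give distinct basis vectors and each is counted once, the sum decomposes as stated, and \eqref{eq:path-triangular} follows.
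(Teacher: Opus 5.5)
Your derivation of $\mathcal S_i\psi_\alpha=\sum_{\beta\ne\alpha,\,e_i\beta=\alpha}\psi_\beta$ is the paper's argument. The constant-chasing you flag as the main obstacle is a one-line check. With $u=z_i/z_{i+1}$ and $\check R_i(u)=\frac{[q/u]}{[qu]}\mathbf 1-\frac{[u]}{[qu]}e_i$, one has $a+b=([q/u]-[u])/[qu]=1$ for \emph{every} $u$ once $q^3=1$. This is what you need. It is not a statement about the homogeneous point, where $u=1$ and $b=0$. Also $-1/b=[qu]/[u]=(qx_i-q^{-1}x_{i+1})/(x_i-x_{i+1})$, so $(s_i\psi_\alpha-\psi_\alpha)/b=\mathcal S_i\psi_\alpha$ exactly, with no rescaling.

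The genuine gap is in your classification of the preimages. You place every reconnection of $(i,i+1)$ with the defect among the raisings $\alpha^{+k}$, and you assert that $\alpha^-$ exists only when an enclosing arc does. This already fails for $L=3$. The two link patterns are $\alpha_1$ (defect at $1$, arc $(2,3)$) and $\alpha_2$ (arc $(1,2)$, defect at $3$). One has $e_2\alpha_2=\alpha_1$ and $e_1\alpha_1=\alpha_2$. In both cases the only nontrivial preimage comes from reconnecting with the defect, so your rule yields both $A(\alpha_2)<A(\alpha_1)$ and $A(\alpha_1)<A(\alpha_2)$. In fact one of them is $\Omega_3$ and the other is $\Omega_3^-$. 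That relation is precisely the first step of the area induction in Proposition~\ref{prop:qkz-coordinate-degree}, so the misclassification is not harmless.

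The correct rule is as follows. The lowering $\alpha^-$ comes from reconnecting with the floor of the face containing $(i,i+1)$. This floor is the immediately enclosing arc if there is one. Otherwise $(i,i+1)$ is outermost, and the floor is the defect itself when the arc lies on the side of the defect where the path has base height $1$. For paths from height $0$ to height $1$, with the defect as the unmatched up step, this is the side to the right of the defect. All other preimages are raisings: reconnecting with any sibling arc in that face (not only adjacent ones), or with the defect when $(i,i+1)$ is outermost on the base-height-$0$ side. Each of these raises the segment between the peak and the corresponding return by two and strictly decreases the area. With this correction, or simply by citing \cite[Figure~2]{dGP} as the paper does, you get the exhaustive decomposition and $A(\alpha^{+k})<A(\alpha)<A(\alpha^-)$.
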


\begin{proof}
Put $u=z_i/z_{i+1}$.  The exchange equation and
\cite[(3.5)]{IP} give, after common denominators have been cleared,
\begin{equation}\label{eq:qkz-exchange-component-vector}
 \left(\frac{[q/u]}{[qu]}\mathbf 1
       -\frac{[u]}{[qu]}e_i\right)\Psi_L
 =s_i\Psi_L .
\end{equation}
Take the coefficient of a link pattern $\alpha$ having a short arc at
$i,i+1$.  At loop weight $-(q+q^{-1})=1$, every link pattern $\beta$ with
$e_i\beta=\alpha$ occurs with coefficient one, including $\beta=\alpha$.
Thus
\[
 \sum_{e_i\beta=\alpha}\psi_\beta
 =\frac{[q/u]}{[u]}\psi_\alpha
  -\frac{[qu]}{[u]}s_i\psi_\alpha .
\]
At $q^3=1$,
\[
 [q/u]-[u]=[qu].
\]
Subtracting the term $\psi_\alpha$ from the preceding sum therefore yields
\[
 \sum_{\substack{\beta\ne\alpha\\e_i\beta=\alpha}}\psi_\beta
 =\frac{[qu]}{[u]}(\psi_\alpha-s_i\psi_\alpha)
 =\frac{qx_i-q^{-1}x_{i+1}}{x_i-x_{i+1}}
   (\psi_\alpha-s_i\psi_\alpha)
 =\mathcal S_i\psi_\alpha.
\]
The planar action of $e_i$ gives the remaining claim.  Its preimages other
than $\alpha$ are $\alpha^-$ and the paths $\alpha^{+k}$ described above.
This is the combinatorial relation shown in \cite[Figure~2]{dGP}.
\end{proof}

In the normalization of \cite[Section~3.5]{IP}, the extremal component has
the following form.  The primitive normalization fixes the nonzero scalar
$c_L$:
\begin{equation}\label{eq:base-component}
  \psi_{\Omega_L}(\boldsymbol z)
  =c_L
  \prod_{1\le i<j\le m}k(z_j,z_i)
  \prod_{m+1\le i<j\le2m-1}k(1/z_i,z_j).
\end{equation}
By \eqref{eq:def-k}, each factor has Laurent exponents in $[-1,1]$ in
every square variable that it contains.  Each $x_j$ occurs in at most $m-1$
factors.  Therefore
\[
  \psi_{\Omega_L}\in\mathcal L_{m-1}^{(L)}.
\]

\begin{proposition}
\label{prop:qkz-coordinate-degree}
For $L=2m-1$ and every link pattern $\alpha$,
\[
  \psi_\alpha\in\mathcal L_{m-1}^{(L)}.
\]
The same bound holds for every component of the rotated vector
$\Psi_L^\uparrow$.
\end{proposition}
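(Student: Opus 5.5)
We argue by induction on the area $A(\alpha)$ between $\Omega_L$ and $\alpha$. Lemma~\ref{lem:root-component-relation} is triangular with respect to $A$ in the following sense. Fix a link pattern $\gamma\ne\Omega_L$ and write it as $\gamma=\alpha^-$ for some path $\alpha$ with a local maximum at positions $i,i+1$ and $A(\alpha)=A(\gamma)-1$. Such an $\alpha$ exists: since $\gamma$ lies strictly below $\Omega_L$, it has a local minimum at which one elementary tile can be added. Relation \eqref{eq:path-triangular} then gives
\[
 \psi_\gamma=\psi_{\alpha^-}
 =\mathcal S_i\psi_\alpha-\sum_{k\ge1}\psi_{\alpha^{+k}} .
\]
Every term on the right has area strictly less than $A(\gamma)$, because $A(\alpha)=A(\gamma)-1$ and $A(\alpha^{+k})<A(\alpha)$. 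The base case $A=0$ is $\gamma=\Omega_L$. There \eqref{eq:base-component} already gives $\psi_{\Omega_L}\in\mathcal L_{m-1}^{(L)}$.

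For the inductive step, assume that $\psi_\beta\in\mathcal L_{m-1}^{(L)}$ whenever $A(\beta)<A(\gamma)$. Lemma~\ref{lem:Si-support} gives $\mathcal S_i\psi_\alpha\in\mathcal L_{m-1}^{(L)}$. The $\psi_{\alpha^{+k}}$ lie in $\mathcal L_{m-1}^{(L)}$ by hypothesis, and $\mathcal L_{m-1}^{(L)}$ is a linear space, so $\psi_\gamma\in\mathcal L_{m-1}^{(L)}$. Since $\mathrm{LP}_L$ is finite and $A$ takes finitely many values, the induction covers every $\alpha$.

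Two points need care here. First, all identities must be read as Laurent-polynomial identities at $q^3=1$. That is how Lemma~\ref{lem:root-component-relation} was derived after clearing denominators. Lemma~\ref{lem:Si-support} shows that $\mathcal S_i$ maps Laurent polynomials to Laurent polynomials, because the division by $x_i-x_{i+1}$ is exact. So no rational functions appear.

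Second, we need the combinatorial claim that every non-maximal Dyck path arises as $\alpha^-$ from a path one tile higher. The lowering operation removes exactly one tile at a local maximum, so its inverse adds one tile at a local minimum. A path different from $\Omega_L$ has a local minimum where this is admissible. This follows from \cite[Figure~2]{dGP}, using the convention of the odd-length paths with one defect. I expect the main obstacle to be checking that the dictionary between link patterns and these paths makes "$\alpha^-$ admissible" and "raising at a minimum" exact inverses, including near the defect endpoint.

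For the rotated vector, the components of $\Psi_L^\uparrow$ are $\psi_\alpha(z_L,\ldots,z_1)$. These are obtained from the $\psi_\alpha$ by permuting the variables. The coordinatewise condition defining $\mathcal L_{m-1}^{(L)}$ is invariant under permutations of $x_1,\ldots,x_L$, so the same bound holds for them.
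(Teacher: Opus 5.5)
Your proposal is correct and follows the paper's proof essentially step for step: induction on the area $A$, writing a non-maximal path as $\alpha^-$ by raising one valley, solving \eqref{eq:path-triangular} for $\psi_{\alpha^-}$, and closing with Lemma~\ref{lem:Si-support}; the rotated vector is handled by the same remark that its components are obtained by permuting the variables. The combinatorial point you flag is harmless, including near the defect: any path other than $\Omega_L$ has a down--up valley, raising it only increases heights and keeps the endpoints, so the raised path is admissible with a peak at those two positions, and lowering that peak returns the original path.
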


\begin{proof}
We induct on $A(\alpha)$.  The case $A(\alpha)=0$ is
\eqref{eq:base-component}.  Let $\beta\ne\Omega_L$ and assume the assertion
for every path of area less than $A(\beta)$.  Choose an addable tile of
$\beta$ and let $\alpha$ be the path obtained by raising the corresponding
valley.  Then $\beta=\alpha^-$ and $A(\alpha)=A(\beta)-1$.  Solving
\eqref{eq:path-triangular} for $\psi_\beta$ gives
\[
  \psi_\beta
  =\mathcal S_i\psi_\alpha-\sum_{k\ge1}\psi_{\alpha^{+k}}.
\]
Every path on the right has area strictly smaller than $A(\beta)$.  The
induction hypothesis and Lemma~\ref{lem:Si-support} therefore put the whole
right-hand side in $\mathcal L_{m-1}^{(L)}$.

The upper vector is obtained by reversing the spectral parameters and
rotating the link pattern.  Hence it has the same coordinatewise support
bound.
\end{proof}

\subsection{Normalization by Laurent interpolation}

\begin{proposition}
\label{prop:normalization}
With the primitive normalization of \cite[Sections~3.4--3.6]{IP},
\begin{equation}\label{eq:normalization-character}
 Z_L(x_1,\ldots,x_L)=\chi_L(x_1,\ldots,x_L)
\end{equation}
for every odd $L$.
\end{proposition}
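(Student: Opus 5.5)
Following the title of this subsection, we identify $Z_L$ with $\chi_L$ by Laurent interpolation, inducting on odd $L$. The base cases $L=1$ (where $Z_1$ is a constant fixed by the primitive normalization and $\chi_1=\operatorname{sp}_{(0)}=1$) and, if needed, $L=3$ are checked directly from \eqref{eq:base-component} and the component relation \eqref{eq:path-triangular}.

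\textbf{Symmetry and reflection invariance.} We first show that $Z_L$ is symmetric in $x_1,\ldots,x_L$ and invariant under each $x_j\mapsto x_j^{-1}$, matching the $BC_L$-invariance of the symplectic character. Summing \eqref{eq:qkz-exchange-component-vector} over all components uses that at loop weight one the covector $\sum_\alpha\langle\alpha|$ satisfies $\sum_\alpha\langle\alpha|e_i=\sum_\alpha\langle\alpha|$. This gives
\[
\frac{[q/u]-[u]}{[qu]}Z_L=s_iZ_L,
\]
and the identity $[q/u]-[u]=[qu]$ at $q^3=1$ yields $s_iZ_L=Z_L$. The boundary reflection relations of \cite[Sections~3.4--3.5]{IP} are treated the same way: the right boundary generator is stochastic in the sum-covector, and so is the left one at the chosen boundary weight. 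This gives invariance under $x_L\to x_L^{-1}$, and by symmetry under every $x_j\to x_j^{-1}$. Proposition~\ref{prop:qkz-coordinate-degree} gives $Z_L\in\mathcal L_{m-1}^{(L)}$. The character $\chi_L=\operatorname{sp}_{\lambda^{(L)}}$ has $\lambda_1=m-1$, so it also lies in $\mathcal L_{m-1}^{(L)}$ and is $BC_L$-invariant.

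\textbf{Recursion.} Summing the vector-deletion relation of \cite[Section~3.5]{IP} over components at $z_L=qz_{L-1}$, i.e.\ $x_L=q^2x_{L-1}$, gives
\[
Z_L\big|_{x_L=q^2x_{L-1}}=\prod_{j\le L-2}(\text{explicit }k\text{-factors in }z_{L-1},z_j)\cdot Z_{L-2}(x_1,\ldots,x_{L-2}).
\]
The summation is valid because the cup $\varphi_{L-1}$ maps the sum-covector to itself at $n=1$. On the other side, $\chi_L$ satisfies the same recursion with the same prefactor; this is the classical specialization identity of Lemma~\ref{lem:character-specializations}. By induction the difference $F=Z_L-\chi_L$ therefore vanishes on $x_L=q^2x_{L-1}$.

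\textbf{Vanishing argument.} By the $BC_L$-invariance of $F$, it then vanishes on all hypersurfaces $x_j=q^{\pm2}x_i^{\pm1}$. Fix $x_1,\ldots,x_{L-1}$ generic and view $F$ as a Laurent polynomial in $x_L$ with exponents in $[-(m-1),m-1]$. Multiplying by $x_L^{m-1}$ gives a polynomial of degree at most $2m-2$. It vanishes at the points $x_L=q^{\pm2}x_i^{\pm1}$ for $i<L$, which number $4(L-1)=8m-8>2m-2$ distinct points for generic parameters. Hence $F\equiv0$, and the identity holds up to a scalar, including the constant normalization.

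\textbf{Main obstacle.} The delicate step is matching constants. The prefactor in the $Z$ deletion recursion involves the normalization $c_L$ of \eqref{eq:base-component}, and it must coincide exactly with the prefactor in the character specialization. I would fix this by evaluating both sides at one convenient point, for instance comparing top coefficients in $x_L$ against $\chi_{L-1}$-type leading terms. Failing that, I would compare both sides at the homogeneous point, where $\chi_L(\mathbf 1)$ is known and $Z_L(\mathbf 1)$ is pinned by the primitive normalization convention of \cite{IP}, which is precisely chosen to make $c_L$ consistent with the recursion.
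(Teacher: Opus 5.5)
Your argument is essentially the paper's proof: symmetry from the sum covector and $[q/u]-[u]=[qu]$, inversion invariance from the right reflection relation, the bound $Z_L,\chi_L\in\mathcal L_{m-1}^{(L)}$, the summed deletion relation matched against the character deletion \eqref{eq:chi-deletion}, and $4(L-1)$ roots against degree $2(m-1)$. Two corrections: the character deletion is \cite[(3.11)]{IP}, not Lemma~\ref{lem:character-specializations}, and your ``main obstacle'' does not arise, because in the primitive normalization \cite[(3.10)]{IP} already carries exactly the prefactor $(-1)^L\prod_{\ell\notin\{i,i+1\}}k(z_i,z_\ell)$ of the character identity, so with $Z_1=\chi_1=1$ the induction gives $Z_L=\chi_L$ exactly rather than up to a scalar, and your homogeneous-point fallback would be circular since $Z_L(\mathbf 1)$ is only known through this proposition.
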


\begin{proof}
At loop weight one, the all-ones covector on the link-pattern basis satisfies
$\langle\mathbf 1|e_i=\langle\mathbf 1|$.  Hence
\[
 \langle\mathbf 1|\check R_i(u)
 =\frac{[q/u]-[u]}{[qu]}\langle\mathbf 1|
 =\langle\mathbf 1|,
\]
where the last equality is $[q/u]-[u]=[qu]$ at $q^3=1$.  The qKZ exchange
relation therefore implies that $Z_L$ is symmetric in the $x_j$.  The right
reflecting-boundary qKZ relation gives invariance under
$x_L\mapsto x_L^{-1}$. Symmetry then gives inversion invariance in every
variable.

The exact vector recursion \cite[(3.10)]{IP} is
\begin{equation}\label{eq:qkz-vector-deletion}
 \Psi_L\big|_{z_{i+1}=qz_i}
 =(-1)^L\!\prod_{\ell\notin\{i,i+1\}}k(z_i,z_\ell)\,
   \varphi_i\Psi_{L-2}.
\end{equation}
The map $\varphi_i$ sends each link-pattern basis vector to one basis vector
with coefficient one.  Summing the components in
\eqref{eq:qkz-vector-deletion} gives
\begin{equation}\label{eq:Z-deletion}
 Z_L\big|_{z_{i+1}=qz_i}
 =(-1)^L\!\prod_{\ell\notin\{i,i+1\}}k(z_i,z_\ell)\,
   Z_{L-2}.
\end{equation}
The symplectic character $\chi_L$ is symmetric and invariant under
inversion.  It obeys the same deletion relation; see \cite[(3.11)]{IP}.

By Proposition~\ref{prop:qkz-coordinate-degree},
$Z_L\in\mathcal L_{m-1}^{(L)}$.  The largest coordinate of the highest weight
of $\chi_L$ is $m-1$, so Weyl symmetry gives
$\chi_L\in\mathcal L_{m-1}^{(L)}$ as well.  We now induct over odd $L$.
For $L=1$, the primitive normalization gives $Z_1=1=\chi_1$.  Assume the
identity at size $L-2$.  Equations \eqref{eq:Z-deletion} and (3.11) of
\cite{IP} show that $Z_L-\chi_L$, viewed as a Laurent polynomial in $x_L$,
vanishes at $x_L=q^2x_k$ for every $k<L$.  Symmetry and inversion give the
four generic values
\[
 x_L\in\{q^2x_k,q^{-2}x_k,q^2x_k^{-1},q^{-2}x_k^{-1}\},
 \qquad 1\le k<L.
\]
Multiply the difference by $x_L^{m-1}$.  It becomes an ordinary polynomial
of degree at most $2(m-1)=L-1$.  For $L\ge3$, it has at least $4(L-1)$
distinct roots when the other variables are algebraically independent.  It
therefore vanishes identically.  Specialization proves the
identity for all parameter values.
\end{proof}

The character $\chi_L$ is symmetric.  The upper-vector convention
\eqref{eq:upward-vector-definition} and Proposition~\ref{prop:normalization}
therefore give
\begin{equation}\label{eq:two-sided-normalization-character}
 \mathcal Z_L(\boldsymbol x)
 =Z_L(x_L,\ldots,x_1)Z_L(x_1,\ldots,x_L)
 =\chi_L(\boldsymbol x)^2.
\end{equation}

\begin{lemma}
\label{lem:two-sided-vector-deletion}
At $z_L=qz_{L-1}$, put
\begin{equation}\label{eq:deletion-scalar}
 a_L(\boldsymbol z)
 :=(-1)^L\prod_{j=1}^{L-2}k(z_{L-1},z_j).
\end{equation}
Then
\begin{equation}\label{eq:downward-deletion-a}
 |\Psi_L^\downarrow\rangle
 =a_L\,\varphi_{L-1}|\Psi_{L-2}^\downarrow\rangle.
\end{equation}
There is also a nonzero rational scalar $b_L(\boldsymbol z)$ such that
\begin{equation}\label{eq:upward-cap-deletion-b}
 \langle\Psi_L^\uparrow|\varphi_{L-1}
 =b_L\,\langle\Psi_{L-2}^\uparrow|.
\end{equation}
The reduced vectors in both formulas carry
$(z_1,\ldots,z_{L-2})$.
\end{lemma}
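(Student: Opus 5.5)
The downward identity \eqref{eq:downward-deletion-a} is the vector recursion \eqref{eq:qkz-vector-deletion} with $i=L-1$. In that case $\ell$ runs over $\{1,\ldots,L-2\}$, and the prefactor is exactly $a_L$ from \eqref{eq:deletion-scalar}. The recursion holds once common denominators are cleared, so it may be specialized at $q^3=1$.

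For the upward statement \eqref{eq:upward-cap-deletion-b}, I will not try to compute $\langle\Psi_L^\uparrow|\varphi_{L-1}$ directly from the rotated vector. The rotation $\alpha\mapsto\bar\alpha$ combined with reversal of the spectral parameters sends the last pair of sites to the first pair. So the most natural input would be a deletion relation at the left end, and there is no deletion at the reflecting left boundary. Instead I will use the qKZ structure of the covector. The first step is to check that $\langle\Psi_L^\uparrow|$ satisfies the transposed exchange relations. From \eqref{eq:upward-vector-definition} and the rotation invariance of $e_i$ (rotation maps $e_i$ to $e_{L-i}$, and reversal maps $z_i$ to $z_{L+1-i}$), one gets $\langle\Psi_L^\uparrow|\check R_i(z_i/z_{i+1})=s_i\langle\Psi_L^\uparrow|$ with the appropriate argument, together with reflection covariance at both ends.

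The second step is to consider $\langle\Psi_L^\uparrow|\varphi_{L-1}$ at $z_L=qz_{L-1}$ as a covector on $V_{L-2}$ in the variables $z_1,\ldots,z_{L-2}$, with $z_{L-1}$ as a spectator. It satisfies the exchange relations for $i\le L-3$, because $\varphi_{L-1}$ commutes with $e_i$ and $\check R_i$ for $i\le L-3$. For the right reflection, I use the standard fusion argument: at $z_L=qz_{L-1}$, the product of $\check R$ matrices that carries the pair $(L-1,L)$ through site $L-2$ acts on the cup $\varphi_{L-1}$ as a scalar times $\varphi_{L-1}$. This is the crossing/unitarity reduction already used in Lemma~\ref{lem:contact-local-identities}(iii). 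It transports the right boundary condition to the reduced system. The reduced covector is therefore a Laurent-polynomial solution of the size-$(L-2)$ upward qKZ system, up to a rational scalar factor.

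The third step invokes uniqueness. By Lemma~\ref{lem:stochastic-ergodic}, generalized by the triangularity of Lemma~\ref{lem:root-component-relation}, every component is determined by its extremal component. The relation \eqref{eq:path-triangular} expresses each $\psi_\beta$ through $\mathcal S_i$ applied to components of smaller area. Hence any solution of the reduced system is a scalar multiple of $\langle\Psi_{L-2}^\uparrow|$, and the scalar is read off from the extremal component $\bar\Omega_{L-2}$.

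To determine $b_L$ and show it is nonzero, pair both sides with $\langle\cdot|\Psi^\downarrow_{L-2}\rangle$, or more simply with the all-ones vector. Summing components with $\varphi_{L-1}$ gives $Z_L^\uparrow|_{z_L=qz_{L-1}}$ on the left and $b_L Z_{L-2}^\uparrow$ on the right. Since $Z_L^\uparrow(\boldsymbol x)=\chi_L(\boldsymbol x)$ by symmetry and Proposition~\ref{prop:normalization}, the deletion relation of $\chi_L$ gives $b_L=a_L$. This is a nonzero rational function, because $\chi_{L-2}\not\equiv0$.

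The main obstacle is the second step: verifying that the fused boundary really reproduces the reduced reflection relation, and that the uniqueness argument needs no genericity. If this fails, I will fall back on direct computation. At $z_L=qz_{L-1}$, the reversed variables are $(qz_{L-1},z_{L-1},z_{L-2},\ldots)$, so the first pair satisfies $z'_2=q^{-1}z'_1$. I would then use inversion symmetry in the $x$ variables, namely the reflection relation applied at the first site, to convert this into the condition $z'_2=qz'_1$ for \eqref{eq:qkz-vector-deletion} with $i=1$. The transposed cap on the rotated pattern is $\varphi_1^\dagger$ composed with rotation, which gives the claim componentwise.
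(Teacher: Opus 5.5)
Your proof of \eqref{eq:downward-deletion-a} is the paper's, but the upward half has a genuine gap in Step 3.

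Lemma~\ref{lem:root-component-relation} shows only that a solution of the exchange relations is determined by its extremal component. It does not show that $\langle\Psi_L^\uparrow|\varphi_{L-1}$ is proportional to $\langle\Psi_{L-2}^\uparrow|$. For that you need the two extremal components to differ by a factor $f$ with $s_if=f$ for every $i$. Otherwise $\mathcal S_i(fg)\ne f\,\mathcal S_ig$, and the recursion does not transport proportionality. The exchange relations at the non-peak positions of the extremal path, together with the boundary reflection, give invariance of the quotient only under a subgroup.

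A degree count cannot finish the job. The reduced components inherit only the bound $\mathcal L_{m-1}^{(L-2)}$, one more than the bound $\mathcal L_{m-2}^{(L-2)}$ satisfied by $\Psi_{L-2}$. In fact the true factor $a_L$ is nonconstant, with coordinate degree one in each $x_j$, so no argument that pins the scalar to a constant can work. Lemma~\ref{lem:stochastic-ergodic} is irrelevant here, since it concerns only $T_L$ at $z=1$.

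The fused boundary relation in Step 2 is asserted rather than derived. Step 4 is correct once proportionality is known, but it has nothing to rest on until then.

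Your fallback fails as stated. $\Psi_L$ is not componentwise invariant under inverting an arbitrary $x_j$; only $Z_L$ and $\widehat P_L$ are. A reflection at the first site inverts only $z_1'$, sending $(qu,u)$ to $(q^{-1}u^{-1},u)$, whose ratio is $qu^2$ rather than $q$.

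The missing idea is simpler than either route. Your premise that no deletion is available at the left end is mistaken: \eqref{eq:qkz-vector-deletion} holds for every $i$, including $i=1$. The only obstruction is that the rotated parameters come in the order $(qu,u)$ with $u=z_{L-1}$, and the first-pair exchange relation removes it.

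At $q^3=1$ one has $\check R(q)=e$, because $[1]=0$ and $[q^2]=-[q]$. Hence
$e_1\Psi_L^\downarrow(qu,u,z_{L-2},\ldots,z_1)=\Psi_L^\downarrow(u,qu,z_{L-2},\ldots,z_1)=a_L\varphi_1\Psi_{L-2}^\downarrow(z_{L-2},\ldots,z_1)$.
Apply $\varphi_1^\dagger$ and use $\varphi_1^\dagger e_1=(\varphi_1^\dagger\varphi_1)\varphi_1^\dagger=\varphi_1^\dagger$ at $n=1$. This gives $\varphi_1^\dagger\Psi_L^\downarrow(qu,u,\ldots)=a_L\Psi_{L-2}^\downarrow$ componentwise.

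Since $\langle\Psi_L^\uparrow|\varphi_{L-1}$ is the rotation of this vector, \eqref{eq:upward-cap-deletion-b} follows with $b_L=a_L$. This agrees with the value you obtained in Step 4, and it needs no uniqueness or fusion argument.
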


\begin{proof}
Equation~\eqref{eq:downward-deletion-a} is exactly the vector
recursion \eqref{eq:qkz-vector-deletion} with $i=L-1$.  For the upper state,
rotate the diagram through $\pi$.  Under the convention
\eqref{eq:upward-vector-definition}, the covector
$\langle\Psi_L^\uparrow|\varphi_{L-1}$ is the transpose of
\begin{equation}\label{eq:rotated-capped-vector}
 \varphi_1^\dagger
 |\Psi_L^\downarrow(z_L,z_{L-1},\ldots,z_1)\rangle.
\end{equation}
Write $u=z_{L-1}$, so the first two parameters in
\eqref{eq:rotated-capped-vector} are $(qu,u)$.  In the normalization
\[
 \check R(s)=\frac{[q/s]}{[qs]}
 \mathbf 1-\frac{[s]}{[qs]}e,
\]
one has, at $q^3=1$,
\[
 \check R(q)=e,
 \qquad\text{since }[1]=0\text{ and }[q^2]=-[q].
\]
The first-pair qKZ exchange relation therefore gives the exact identity
\begin{equation}\label{eq:exact-first-pair-cap-exchange}
 e_1|\Psi_L^\downarrow(qu,u,z_{L-2},\ldots,z_1)\rangle
 =|\Psi_L^\downarrow(u,qu,z_{L-2},\ldots,z_1)\rangle.
\end{equation}
The vector on the right is at the standard deletion specialization.  Combine
\eqref{eq:qkz-vector-deletion} with
$e_1=\varphi_1\varphi_1^\dagger$ and
$\varphi_1^\dagger\varphi_1=n\,\mathrm{Id}=\mathrm{Id}$.  This yields
\[
 \varphi_1^\dagger
 |\Psi_L^\downarrow(qu,u,z_{L-2},\ldots,z_1)\rangle
 =b_L
 |\Psi_{L-2}^\downarrow(z_{L-2},\ldots,z_1)\rangle
\]
for a nonzero rational scalar $b_L$.  This is the rotated-cap calculation from the proof of
\cite[Proposition~4.3]{IP}.  Rotating back proves
\eqref{eq:upward-cap-deletion-b}.
\end{proof}

\begin{corollary}
\label{cor:contact-probability-identities}
The rational function $\widehat P_L(w;\boldsymbol z)$ is symmetric in the
$x_j$, invariant under $x_j\mapsto x_j^{-1}$, and satisfies
\begin{equation}\label{eq:P-deletion-proved}
 \widehat P_L(w;\boldsymbol z)\big|_{z_L=qz_k}
 =\widehat P_{L-2}(w;\widehat z_k,\widehat z_L),
 \qquad 1\le k<L.
\end{equation}
\end{corollary}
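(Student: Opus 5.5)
The plan is to prove the three properties separately, using Lemma~\ref{lem:contact-local-identities} for the operator and the vector relations for the states. Throughout, \eqref{eq:two-sided-normalization-character} is used for the denominator.

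\emph{Symmetry.} Fix $i$ and write the exchange relation as $\check R_i(z_i/z_{i+1})\Psi_L^\downarrow(\ldots,z_i,z_{i+1},\ldots)=\Psi_L^\downarrow(\ldots,z_{i+1},z_i,\ldots)$, with common denominators cleared. The rotated convention \eqref{eq:upward-vector-definition} turns this into the dual relation for $\langle\Psi_L^\uparrow|$. That dual relation supplies the inverse $\check R_i$; unitarity $\check R_i(u)\check R_i(u^{-1})=\mathbf 1$ holds as a rational identity. Inserting \eqref{eq:contact-interlacing} into the numerator $\langle\Psi_L^\uparrow,\widehat\rho_b^{(L)}\Psi_L^\downarrow\rangle$ moves the $\check R_i$ through $\widehat\rho_b^{(L)}$. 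The two $\check R_i$ factors then cancel, so the numerator is invariant under $s_i$. The denominator $\chi_L^2$ is symmetric, so $\widehat P_L$ is symmetric.

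\emph{Inversion.} By \eqref{eq:contact-reflection-right}, inside matrix elements between the reflecting boundary states, the numerator is invariant under $z_L\mapsto z_L^{-1}$. The right reflection qKZ relation acts on both $\Psi_L^\downarrow$ and $\Psi_L^\uparrow$, and the relevant boundary factor is trivial at loop weight one, exactly as for $Z_L$ in Proposition~\ref{prop:normalization}. The character $\chi_L$ is inversion invariant. Hence $\widehat P_L$ is invariant under $x_L\mapsto x_L^{-1}$, and symmetry transports this to every $x_j$.

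\emph{Deletion.} By symmetry it suffices to take $k=L-1$. Substitute \eqref{eq:downward-deletion-a} into the numerator and apply \eqref{eq:contact-operator-deletion}. Then apply \eqref{eq:upward-cap-deletion-b}. This gives
\[
\langle\Psi_L^\uparrow,\widehat\rho_b^{(L)}\Psi_L^\downarrow\rangle
=a_Lb_L\langle\Psi_{L-2}^\uparrow,\widehat\rho_b^{(L-2)}\Psi_{L-2}^\downarrow\rangle .
\]
Apply the same chain with $\widehat\rho_b$ replaced by the identity; the deletion identity for the identity operator is trivial. This yields $\mathcal Z_L=a_Lb_L\mathcal Z_{L-2}$ at the specialization. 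Dividing the two identities gives \eqref{eq:P-deletion-proved}, provided $a_Lb_L\mathcal Z_{L-2}\ne0$ as a rational function. This holds because $\chi_{L-2}$ is nonzero and $a_L$, $b_L$ are nonzero.

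The main obstacle I expect is the reflection step. One must check that the boundary identity for the operator, combined with the boundary qKZ relations for both vectors, produces no stray scalar in the numerator that fails to match the one in the denominator. The same scalar bookkeeping arises in the deletion step, where I will use the fact that the same $a_Lb_L$ appears in both ratios. This cancellation is what makes the argument work without computing $b_L$ explicitly.
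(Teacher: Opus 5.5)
Your proposal is correct and follows the paper's proof essentially step for step: exchange relations for both states plus unitarity and \eqref{eq:contact-interlacing} for symmetry, the right-reflection relations for inversion, and cancellation of the common factor $a_Lb_L$ between the numerator and $\mathcal Z_L$ for deletion. The only difference is cosmetic: you take the symmetries of the denominator from $\mathcal Z_L=\chi_L^2$ rather than rerunning the exchange argument with the identity operator. This is legitimate because Proposition~\ref{prop:normalization} is proved independently beforehand. Your explicit check that $a_Lb_L\mathcal Z_{L-2}\neq 0$ is a small point of care that the paper leaves implicit.
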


\begin{proof}
Insert the qKZ exchange relations for the lower and upper states into
\eqref{eq:def-P-hat}.  Use unitarity of $\check R_i$, then apply
\eqref{eq:contact-interlacing}.  This exchanges $z_i$ and $z_{i+1}$ without
changing the numerator.  The same calculation with the identity in
place of $\widehat\rho_b^{(L)}$ proves the corresponding symmetry of
$\mathcal Z_L$.  The right-reflection relations give invariance of both matrix elements
under $z_L\mapsto z_L^{-1}$.  Permutation symmetry gives inversion invariance
in every variable.  Hence their quotient has all the
stated symmetries.

For deletion, first use permutation symmetry to move $z_k$ to the
$(L-1)$st position.  Write
\[
 M_L=\langle\Psi_L^\uparrow,
       \widehat\rho_b^{(L)}\Psi_L^\downarrow\rangle.
\]
At $z_L=qz_{L-1}$, Lemma~\ref{lem:two-sided-vector-deletion} and
\eqref{eq:contact-operator-deletion} give
\begin{align*}
 M_L
 &=a_L\langle\Psi_L^\uparrow,
       \widehat\rho_b^{(L)}\varphi_{L-1}
       \Psi_{L-2}^\downarrow\rangle\\
 &=a_L\langle\Psi_L^\uparrow,
       \varphi_{L-1}\widehat\rho_b^{(L-2)}
       \Psi_{L-2}^\downarrow\rangle\\
 &=a_Lb_L\langle\Psi_{L-2}^\uparrow,
       \widehat\rho_b^{(L-2)}\Psi_{L-2}^\downarrow\rangle
 =a_Lb_LM_{L-2}.
\end{align*}
The identical computation with the middle operator replaced by the identity
shows
\[
 \mathcal Z_L=a_Lb_L\mathcal Z_{L-2}.
\]
Thus the product of the two scalar factors cancels.  No choice of sign or
cap normalization is needed.  Permuting the remaining
variables back proves \eqref{eq:P-deletion-proved}.
\end{proof}

\subsection{Degree of the cleared numerator}

\begin{lemma}\label{lem:cleared-degree}
Every matrix entry of $\widetilde\rho_L$ belongs to
$\mathcal L_1^{(L)}$ over $\C(w)$.  Consequently,
\[
  N_L(\boldsymbol x;w)\in\mathcal L_L^{(L)}.
\]
\end{lemma}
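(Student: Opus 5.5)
The proof has two parts: a bound on the matrix entries of $\widetilde\rho_L$, and then the bound on $N_L$.

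\emph{Entries of $\widetilde\rho_L$.} By \eqref{eq:contact-operator-entry}, every entry of $\widehat\rho_b^{(L)}$ is a finite sum over two-row configurations $C$. Each term is $\operatorname{wt}(C)\mathbf 1_b(C)$, where $\operatorname{wt}(C)$ is a product of exactly two local $R$-weights in each column $j$, one from the lower row and one from the upper row. The indicator and the loop factors are scalars that do not depend on the spectral parameters.

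Column $j$ is the only place where $z_j$ appears. The lower tile has denominator $[qw/z_j]$ or $[qwz_j]$, according to the orientation convention, and the upper tile carries the other one, with $w^{-1}$. So the product of the two column-$j$ weights has denominator $[qw/z_j][qwz_j]=k(1/w,z_j)$. Multiplying by $K_L$ replaces column $j$ by a product of two numerators. Each numerator is of the form $[q^{\epsilon}w^{\pm1}z_j^{\pm1}]$ or $[w^{\pm1}z_j^{\pm1}]$.

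The key check is that this product of two numerators, as a function of $z_j$, is a Laurent polynomial in $x_j=z_j^2$ with exponents in $[-1,1]$. Each bracket contributes $z_j^{\pm1}$, so the product has $z_j$-exponents in $\{-2,0,2\}$. It remains to rule out odd powers. The two tiles in a column carry $z_j$ and $z_j^{-1}$ symmetrically, so each weight is a ratio of the form $[c\,z_j^{\pm1}]/[qwz_j^{\pm1}]$. I would verify the four cases (straight/straight, straight/turn, and so on) directly from the $\check R$ normalization. In each case the product of two brackets $[az_j][bz_j^{-1}]$ or $[az_j][bz_j]$ expands into terms $z_j^{\pm2}$ and $z_j^0$ only. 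The identity $k(a,b)=q^2a^{-2}+q^{-2}a^2-b^2-b^{-2}$ in \eqref{eq:def-k} is the model computation for this. This is the step I expect to require care: tracking which tile carries $z_j$ versus $z_j^{-1}$ under the paper's double-row convention, so that the product is even in $z_j$. If the convention produced $[az_j][bz_j]$, that would still be even. Odd powers cannot arise, because each bracket is odd in $z_j$. So the product is even, and every entry lies in $\mathcal L_1^{(L)}$ over $\C(w)$. The reflecting edge $e_b$ carries no spectral parameter, so it contributes only a constant factor.

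\emph{Bound on $N_L$.} Expand
\[
N_L=\sum_{\alpha,\beta}\psi_\alpha(z_L,\ldots,z_1)\,\langle\bar\alpha|\widetilde\rho_L|\beta\rangle\,\psi_\beta(z_1,\ldots,z_L).
\]
By Proposition~\ref{prop:qkz-coordinate-degree}, each $\psi$-factor lies in $\mathcal L_{m-1}^{(L)}$. The middle factor lies in $\mathcal L_1^{(L)}$. Coordinatewise exponents add under multiplication, so each term lies in $\mathcal L_{2(m-1)+1}^{(L)}=\mathcal L_{L}^{(L)}$, since $2m-1=L$. Summing the terms gives the claim.
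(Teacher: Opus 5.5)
Your proposal is correct and follows the paper's proof. You clear the column denominator $[qw/z_j][qwz_j]=k(1/w,z_j)$, so that each column contributes one numerator bracket from each tile and hence only the powers $x_j^{-1},1,x_j$; you then add coordinatewise degrees $(m-1)+1+(m-1)=L$ using Proposition~\ref{prop:qkz-coordinate-degree}. The separate parity check is unnecessary: exponents in $\{-2,0,2\}$ already follow because each bracket $[cz_j^{\pm1}]$ has $z_j$-exponents $\pm1$.
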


\begin{proof}
The operator formula is
\[
  \check R(u)=\frac{[q/u]}{[qu]}\,\mathbf1
  -\frac{[u]}{[qu]}\,e.
\]
Multiply the two $R$-tiles in column $j$ by their common denominator
$[qw/z_j][qwz_j]$.  Every local coefficient is then a product of one factor
from
\[
  [qz_j/w],\quad [w/z_j]
\]
and one factor selected from
\[
  [q/(wz_j)],\quad [wz_j].
\]
Such a product contains only the powers $z_j^{-2},1,z_j^2$.  These are the
powers $x_j^{-1},1,x_j$.  Restricting to configurations in which $e_b$ lies
on the through-going interlace only removes terms.  It cannot enlarge the
support.  Thus every entry of $\widetilde\rho_L$ is in
$\mathcal L_1^{(L)}$.

Expanding \eqref{eq:def-NL} in upward and downward link-pattern components,
each summand has coordinate degree at most
\[
  (m-1)+1+(m-1)=2m-1=L
\]
by Proposition~\ref{prop:qkz-coordinate-degree}.  Summation cannot enlarge
Laurent support.
\end{proof}

\subsection{Deletion of the cleared numerator}

The symplectic character obeys
\begin{equation}\label{eq:chi-deletion}
  \chi_M(u_1^2,\ldots,u_M^2)\big|_{u_i=qu_j}
  =(-1)^M\!\prod_{\ell\ne i,j}k(u_j,u_\ell)\,
  \chi_{M-2}(\widehat u_i^2,\widehat u_j^2).
\end{equation}
This is equation~(3.11) of \cite{IP}.  By
Corollary~\ref{cor:contact-probability-identities}, the boundary-contact
weight obeys \eqref{eq:P-deletion-proved}.

Put
\[
  I_k=\{1,\ldots,L\}\setminus\{k,L\},
  \qquad
  A_{L,k}=\prod_{\ell\in I_k}k(z_k,z_\ell).
\]
Then \eqref{eq:chi-deletion} and \eqref{eq:P-deletion-proved} give the full
specialization of the cleared numerator.

\begin{lemma}\label{lem:N-deletion}
At $z_L=qz_k$,
\begin{equation}\label{eq:N-deletion}
  N_L(\boldsymbol x;w)
  =A_{L,k}^2 k(1/w,z_k)k(1/w,qz_k)
  N_{L-2}(\widehat x_k,\widehat x_L;w).
\end{equation}
\end{lemma}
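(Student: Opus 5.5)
We factor the cleared numerator as $N_L=D_L\widehat P_L$ with $D_L=\mathcal Z_LK_L$, see \eqref{eq:N-D-P}, and specialize each factor at $z_L=qz_k$ separately. By Corollary~\ref{cor:contact-probability-identities}, the factor $\widehat P_L$ reduces to $\widehat P_{L-2}(w;\widehat z_k,\widehat z_L)$. It therefore suffices to show that
\[
 D_L\big|_{z_L=qz_k}
 =A_{L,k}^2\,k(1/w,z_k)\,k(1/w,qz_k)\,D_{L-2}(\widehat x_k,\widehat x_L;w).
\]
Multiplying this by the deletion relation for $\widehat P_L$ gives \eqref{eq:N-deletion} as an identity of rational functions. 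Both sides are Laurent polynomials, so the identity holds as an identity of Laurent polynomials.

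For the normalization we use \eqref{eq:two-sided-normalization-character}, which gives $\mathcal Z_L=\chi_L^2$. Apply \eqref{eq:chi-deletion} with $M=L$, $u_\ell=z_\ell$, $i=L$ and $j=k$. The product there runs over $\ell\ne L,k$, that is, over $\ell\in I_k$, and the factors are $k(z_k,z_\ell)$. Hence
\[
 \chi_L\big|_{z_L=qz_k}=(-1)^L A_{L,k}\,\chi_{L-2}(\widehat x_k,\widehat x_L).
\]
Squaring removes the sign and gives $\mathcal Z_L|_{z_L=qz_k}=A_{L,k}^2\,\mathcal Z_{L-2}$, the latter in the reduced variables. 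Because $\chi$ is symmetric, no reordering issue arises, and the character deletion directly yields the general pair $(k,L)$.

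For the denominator, \eqref{eq:def-KL} is a product over columns, so at $z_L=qz_k$
\[
 K_L=\prod_{j\ne k,L}k(1/w,z_j)\cdot k(1/w,z_k)\,k(1/w,qz_k),
\]
and the first product is exactly $K_{L-2}$ in the reduced variables. Here $k(1/w,qz_k)$ means $k(1/w,z_L)$ evaluated at $z_L=qz_k$. Combining the three factors proves the lemma.

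The main point needing care is the bookkeeping of signs and variables. We must confirm that the convention in \eqref{eq:chi-deletion}, namely $u_i=qu_j$ with the product $k(u_j,u_\ell)$, matches $z_L=qz_k$ with $j=k$, so that the prefactor is $A_{L,k}$ and not a variant such as $k(z_\ell,z_k)$. The square makes the sign irrelevant. We must also confirm that the symmetry of $\widehat P_L$ justifies applying \eqref{eq:P-deletion-proved} for arbitrary $k$; Corollary~\ref{cor:contact-probability-identities} states this directly.
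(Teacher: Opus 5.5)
Your proposal is correct and follows the paper's proof: write $N_L=\mathcal Z_LK_L\widehat P_L$, use $\mathcal Z_L=\chi_L^2$ with the character deletion \eqref{eq:chi-deletion} to get $A_{L,k}^2\mathcal Z_{L-2}$, peel the two column factors $k(1/w,z_k)k(1/w,qz_k)$ off $K_L$, and insert \eqref{eq:P-deletion-proved} from Corollary~\ref{cor:contact-probability-identities}. Your index matching ($i=L$, $j=k$, giving $A_{L,k}$) is right, as is your remark that squaring removes the sign $(-1)^L$.
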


\begin{proof}
Equation~\eqref{eq:two-sided-normalization-character} and
\eqref{eq:chi-deletion} give
\[
  \mathcal Z_L\big|_{z_L=qz_k}=A_{L,k}^2\mathcal Z_{L-2}.
\]
Moreover,
\[
  K_L\big|_{z_L=qz_k}
  =k(1/w,z_k)k(1/w,qz_k)K_{L-2}.
\]
Insert these two identities and \eqref{eq:P-deletion-proved} into
\eqref{eq:N-D-P}.
\end{proof}

\subsection{Candidate numerator and Laurent interpolation}

Define
\begin{equation}\label{eq:N-star}
  N_L^*(\boldsymbol x;w)
  =-[q][w^2]
  \chi_{L+1}(w^2,x_1,\ldots,x_L)
  \chi_{L+1}((q/w)^2,x_1,\ldots,x_L).
\end{equation}
The star is only a label for the proposed expression.  It does not denote
complex conjugation.

\begin{lemma}\label{lem:Nstar-degree}
One has
\[
  N_L^*\in\mathcal L_{L-1}^{(L)}\subsetneq\mathcal L_L^{(L)}.
\]
\end{lemma}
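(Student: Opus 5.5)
We need to show that each $x_j$ appears in $N_L^*$ with exponents in $[-(L-1),L-1]$, and that the inclusion $\mathcal L_{L-1}^{(L)}\subsetneq\mathcal L_L^{(L)}$ is strict. Strictness is immediate, since the monomial $x_1^L$ lies in $\mathcal L_L^{(L)}$ but not in $\mathcal L_{L-1}^{(L)}$. The factor $-[q][w^2]$ does not involve any $x_j$. So it suffices to bound the coordinatewise degree of each character factor $\chi_{L+1}(y_0,x_1,\ldots,x_L)$, where $y_0=w^2$ or $y_0=(q/w)^2$ is treated as a coefficient in $\C(w)$.

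With $L=2m-1$ we have $M=L+1=2m$. The highest weight is
\[
\lambda^{(2m)}_j=\left\lfloor\tfrac{2m-j}{2}\right\rfloor,
\qquad\text{so}\qquad \lambda^{(2m)}_1=m-1 .
\]
By the Weyl character formula, $\operatorname{sp}_\lambda$ is a sum of weight monomials $y^\mu$. Every weight $\mu$ lies in the Weyl-group orbit hull of $\lambda$; concretely, $\mu$ is dominated, after signed permutation, by $\lambda$. Hence $|\mu_i|\le\lambda_1$ for every coordinate.

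I would justify the bound $|\mu_i|\le\lambda_1$ without weight theory, directly from the determinantal formula (2). Expand the numerator determinant in the variable $y_i$. That variable appears only in row $i$, with exponents $\pm(\lambda_j+M-j+1)$, bounded by $\lambda_1+M$. The denominator is the Weyl denominator, whose row $i$ has top exponent $M$. The quotient is a Laurent polynomial in $y_i$, symmetric under $y_i\mapsto y_i^{-1}$, and its top degree is at most $(\lambda_1+M)-M=\lambda_1$. To make the top-degree subtraction rigorous, one can compare leading coefficients in $y_i$: the top coefficient of the denominator in $y_i$ is a nonzero Laurent polynomial in the other variables. Inversion symmetry then gives the lower bound $-\lambda_1$. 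This is exactly the argument the paper used in Proposition~\ref{prop:normalization} for $\chi_L\in\mathcal L^{(L)}_{m-1}$, so I would invoke that reasoning verbatim.

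Each factor $\chi_{L+1}(\cdot,x_1,\ldots,x_L)$ therefore lies in $\mathcal L^{(L)}_{m-1}$ over $\C(w)$, with coefficients Laurent in $w$. The product of two such factors has coordinatewise degree at most
\[
2(m-1)=L-1 ,
\]
so $N_L^*\in\mathcal L^{(L)}_{L-1}$.

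The only delicate point is the per-variable degree bound for the character. I expect the leading-term division argument, or equivalently the citation of weight multiplicities, to be where care is needed; everything else is bookkeeping.
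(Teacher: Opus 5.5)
Your proposal is correct and follows the paper's proof: you bound each factor $\chi_{L+1}$ coordinatewise by $\lambda^{(L+1)}_1=m-1$ using the Weyl-orbit convex hull, observe that $-[q][w^2]$ does not involve the $x_j$, and add the two bounds to get $2(m-1)=L-1$. Your extra leading-coefficient comparison in the bialternant (numerator of degree at most $\lambda_1+M$ in $y_i$, denominator of exact degree $M$ with nonzero cofactor as leading coefficient) is a valid elementary replacement for the weight-theory fact; note, though, that Proposition~\ref{prop:normalization} actually used the highest-weight and Weyl-symmetry argument there, not this determinant computation.
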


\begin{proof}
For the character $\chi_{L+1}$, the largest part of the highest weight is
$(L-1)/2=m-1$.  Every weight of an irreducible symplectic representation lies
in the convex hull of the Weyl orbit of its highest weight.  Since the Weyl
group permutes coordinates and changes their signs, every exponent of a fixed
character variable lies between $-(m-1)$ and $m-1$.  Multiplying the two
character factors in \eqref{eq:N-star} gives the interval
$[-2(m-1),2(m-1)]=[-(L-1),L-1]$.
\end{proof}

We next verify every factor in the candidate deletion.  At $z_L=qz_k$,
\eqref{eq:chi-deletion} yields
\begin{align}
  \chi_{L+1}(w^2,\boldsymbol x)
  &=(-1)^{L+1}k(z_k,w)A_{L,k}
  \chi_{L-1}(w^2,\widehat x_k,\widehat x_L),
  \label{eq:delete-char-one}\\
  \chi_{L+1}((q/w)^2,\boldsymbol x)
  &=(-1)^{L+1}k(z_k,q/w)A_{L,k}
  \chi_{L-1}((q/w)^2,\widehat x_k,\widehat x_L).
  \label{eq:delete-char-two}
\end{align}
The remaining scalar factors match because
\begin{equation}\label{eq:k-match}
  k(z,w)k(z,q/w)=k(1/w,z)k(1/w,qz).
\end{equation}
Indeed,
\[
\begin{split}
 k(z,w)k(z,q/w)
 &=[qw/z][q/(zw)][q^2/(zw)][w/z],\\
 k(1/w,z)k(1/w,qz)
 &=[qzw][qw/z][q^2zw][w/z],
\end{split}
\]
and $q^3=1$ implies
$[q/(zw)]=-[q^2zw]$ and $[q^2/(zw)]=-[qzw]$.
Multiplying \eqref{eq:delete-char-one} and
\eqref{eq:delete-char-two} therefore gives
\begin{equation}\label{eq:Nstar-deletion}
  N_L^*\big|_{z_L=qz_k}
  =A_{L,k}^2 k(1/w,z_k)k(1/w,qz_k)
  N_{L-2}^*(\widehat x_k,\widehat x_L;w).
\end{equation}
Thus $N_L$ and $N_L^*$ have precisely the same deletion factor.

Both functions are symmetric in the $x_j$ and invariant under
$x_j\mapsto x_j^{-1}$.  For $N_L$, this follows from
Corollary~\ref{cor:contact-probability-identities},
Proposition~\ref{prop:normalization}, and the symmetries of $K_L$.  For
$N_L^*$, it follows from Weyl symmetry.  Assume by induction that the two
numerators agree at size $L-2$.  Then $N_L-N_L^*$ vanishes at
\begin{equation}\label{eq:four-interpolation-points}
  x_L\in
  \{q^2x_k,q^{-2}x_k,q^2x_k^{-1},q^{-2}x_k^{-1}\},
  \qquad 1\le k<L.
\end{equation}

\begin{lemma}\label{lem:Laurent-interpolation}
Let
\[
  F,G\in\C(w,x_1,\ldots,x_{L-1})[u,u^{-1}].
\]
Thus $F$ and $G$ are Laurent polynomials in $u$, with coefficients that are
rational functions of $w,x_1,\ldots,x_{L-1}$.  Suppose that all powers of
$u$ lie in $[-L,L]$.  If they agree at the
$4(L-1)$ generic points $u=q^{\pm2}x_k^{\pm1}$, $1\le k<L$, then
$F=G$ for every odd $L\ge3$.
\end{lemma}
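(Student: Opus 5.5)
The plan is to reduce the statement to the fact that a nonzero polynomial over a field has at most as many roots as its degree. Everything happens inside the field
\[
 \mathbb K=\C(w,x_1,\ldots,x_{L-1}),
\]
where $w,x_1,\ldots,x_{L-1}$ are independent indeterminates. Put $H=F-G\in\mathbb K[u,u^{-1}]$. By hypothesis every power of $u$ occurring in $H$ lies in $[-L,L]$, so
\[
 P(u)=u^{L}H(u)
\]
is an ordinary polynomial in $\mathbb K[u]$ of degree at most $2L$.

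Next I interpret ``agree at the points'' as evaluation. Each point $u_0=q^{\pm2}x_k^{\pm1}$ is a nonzero element of $\mathbb K$. Hence substituting $u=u_0$ is a ring homomorphism $\mathbb K[u,u^{-1}]\to\mathbb K$, and $H(u_0)=0$ gives $P(u_0)=u_0^LH(u_0)=0$. So $P$ vanishes at every point of
\[
 S=\{q^{2}x_k,\;q^{-2}x_k,\;q^{2}x_k^{-1},\;q^{-2}x_k^{-1}:1\le k<L\}.
\]

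The main step, and the only place where ``generic'' enters, is to check that $S$ has exactly $4(L-1)$ distinct elements of $\mathbb K$. There are three cases.
\begin{itemize}
\item Points with different indices $k\ne k'$ are distinct, since $x_k$ and $x_{k'}$ are independent indeterminates and no relation $x_k^{\pm1}=c\,x_{k'}^{\pm1}$ with $c\in\C$ can hold.
\item For a fixed $k$, the points $q^{2}x_k$ and $q^{-2}x_k$ are distinct, since their ratio is $q^4=q\ne1$. The same argument separates $q^{2}x_k^{-1}$ and $q^{-2}x_k^{-1}$.
\item For a fixed $k$, an equality $q^{a}x_k=q^{b}x_k^{-1}$ would force $x_k^2\in\C$. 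This is impossible for an indeterminate.
\end{itemize}

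Finally I compare the number of roots with the degree. The polynomial $P$ has at least $4(L-1)$ distinct roots in $\mathbb K$ and degree at most $2L$. For odd $L\ge3$ we have $4L-4>2L$, because this inequality is equivalent to $L>2$. A nonzero polynomial over a field cannot have more roots than its degree, so $P=0$. Since $u^L$ is a unit in $\mathbb K[u,u^{-1}]$, it follows that $H=0$, that is, $F=G$. The case $L=1$ is excluded precisely because the count fails there.

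For the intended application I take $F=N_L$ and $G=N_L^*$, viewed as Laurent polynomials in $u=x_L$. Lemma~\ref{lem:cleared-degree} gives $N_L\in\mathcal L_L^{(L)}$, and Lemma~\ref{lem:Nstar-degree} gives $N_L^*\in\mathcal L_{L-1}^{(L)}\subseteq\mathcal L_L^{(L)}$, so both have $x_L$-exponents in $[-L,L]$. Their agreement at the points \eqref{eq:four-interpolation-points} is the induction output just above the lemma.
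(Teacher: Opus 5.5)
Your proof is correct and follows the paper's argument: you multiply $F-G$ by $u^L$ to get a polynomial of degree at most $2L$, which vanishes at $4(L-1)>2L$ distinct points of $\C(w,x_1,\ldots,x_{L-1})$ and is therefore zero. Your explicit three-case check that the points are distinct, together with working over that function field throughout, simply spells out what the paper compresses into ``for algebraically independent $x_1,\ldots,x_{L-1}$'' and ``extends to all parameter values by specialization.''
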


\begin{proof}
The function $u^L(F-G)$ is an ordinary polynomial of degree at most $2L$.
For algebraically independent $x_1,\ldots,x_{L-1}$, the displayed
$4(L-1)$ roots are pairwise distinct.  Since $4(L-1)>2L$ for $L\ge3$, the
polynomial is zero.  The identity then extends to all parameter values by
specialization.
\end{proof}

For $L=1$, one has $Z_1=\chi_1=\chi_2=1$, and the direct four-configuration
calculation in \cite[proof of Proposition~4.6]{IP} gives
\[
\begin{split}
 N_1
 &=[qw/z_1][qz_1w]-[qz_1/w][q/(z_1w)]\\
 &=-[q][w^2]
 =N_1^*.
\end{split}
\]

\begin{theorem}
\label{thm:inhomogeneous}
For every odd $L$,
\begin{equation}\label{eq:inhomogeneous-formula}
  \widehat P_L(w;\boldsymbol z)
  =
  \frac{-[q][w^2]}{\prod_{j=1}^L k(1/w,z_j)}
  \frac{
  \chi_{L+1}(w^2,x_1,\ldots,x_L)
  \chi_{L+1}((q/w)^2,x_1,\ldots,x_L)}
  {\chi_L(x_1,\ldots,x_L)^2}.
\end{equation}
\end{theorem}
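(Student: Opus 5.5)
The plan is to prove the equivalent polynomial identity $N_L=N_L^*$ for every odd $L$ by induction on $L$ in steps of two. Dividing by $D_L=\mathcal Z_LK_L$ then gives the theorem, using \eqref{eq:N-D-P}, \eqref{eq:two-sided-normalization-character} and \eqref{eq:def-KL}. This division is legitimate as an identity of rational functions, since $\chi_L$ and $K_L$ are nonzero Laurent polynomials. The base case $L=1$ is the four-configuration computation recorded just before the statement, which gives $N_1=-[q][w^2]=N_1^*$.

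For the inductive step, fix odd $L\ge3$ and assume $N_{L-2}=N_{L-2}^*$ as Laurent polynomials in $x_1,\ldots,x_{L-2}$ over $\C(w)$. I regard $F=N_L$ and $G=N_L^*$ as Laurent polynomials in $u=x_L$ whose coefficients lie in $\C(w,x_1,\ldots,x_{L-1})$.

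\emph{Support.} Lemma~\ref{lem:cleared-degree} puts every power of $x_L$ in $N_L$ in $[-L,L]$. Lemma~\ref{lem:Nstar-degree} puts the powers of $x_L$ in $N_L^*$ in $[-(L-1),L-1]\subseteq[-L,L]$. So the degree hypothesis of Lemma~\ref{lem:Laurent-interpolation} holds.

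\emph{Agreement at $x_L=q^2x_k$.} For each $k<L$, set $z_L=qz_k$, so that $x_L=q^2x_k$. Lemma~\ref{lem:N-deletion} gives $N_L=A_{L,k}^2k(1/w,z_k)k(1/w,qz_k)\,N_{L-2}(\widehat x_k,\widehat x_L;w)$. Equation \eqref{eq:Nstar-deletion} gives the same formula with $N^*$ in place of $N$, the prefactors matching by \eqref{eq:k-match}. The induction hypothesis therefore yields $F=G$ at these points.

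\emph{The remaining points.} Both $N_L$ and $N_L^*$ are symmetric in the $x_j$ and invariant under each $x_j\mapsto x_j^{-1}$. For $N_L$ this holds because $\widehat P_L$, $\chi_L^2$ and $K_L$ all have these symmetries: Corollary~\ref{cor:contact-probability-identities}, and each $k(1/w,z_j)$ depends on $z_j$ only through $[qw/z_j][qwz_j]$. For $N_L^*$ it is Weyl symmetry of $\chi_{L+1}$ in its last $L$ arguments. Inverting both $x_L$ and $x_k$ turns the relation $x_L=q^2x_k$ into $x_L=q^{-2}x_k^{-1}$. Inverting $x_k$ alone turns it into $x_L=q^2x_k^{-1}$. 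Inverting $x_L$ alone turns it into $x_L=q^{-2}x_k$. Hence $F-G$ vanishes at all four values in \eqref{eq:four-interpolation-points} for every $k<L$.

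These are $4(L-1)$ points, pairwise distinct for generic $x$, and $4(L-1)>2L$ when $L\ge3$. Lemma~\ref{lem:Laurent-interpolation} then gives $N_L=N_L^*$, which closes the induction.

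The step I expect to need the most care is transporting the deletion identity to all four families of roots. The point to check is that the symmetry argument is applied to the Laurent polynomials $N_L$ and $N_L^*$, not only to the rational function $\widehat P_L$. Concretely, I must verify that the inversion invariance of $\widehat P_L$ in Corollary~\ref{cor:contact-probability-identities} survives multiplication by $K_L\chi_L^2$. This holds because $k(1/w,z)$ is invariant under $z\mapsto z^{-1}$ and $\chi_L$ is Weyl invariant. A second point to check is that the specialization $z_L=qz_k$ in Lemma~\ref{lem:N-deletion} is valid as a polynomial identity, which it is because all denominators were cleared before specializing. Once these are in place, the interpolation count is routine.
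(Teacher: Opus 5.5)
Your proposal is correct and follows the paper's proof essentially step for step: induction on odd $L$ via $N_L=N_L^*$, agreement at $x_L=q^2x_k$ from Lemma~\ref{lem:N-deletion} and \eqref{eq:Nstar-deletion}, transport to the other three points by symmetry and inversion, the $[-L,L]$ support bounds, Lemma~\ref{lem:Laurent-interpolation}, and finally division by $D_L$. One harmless slip: you have two labels swapped. Inverting both $x_L$ and $x_k$ gives $x_L=q^{-2}x_k$, and inverting $x_L$ alone gives $x_L=q^{-2}x_k^{-1}$. The resulting set of four roots is the same, so the argument is unaffected.
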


\begin{proof}
We prove $N_L=N_L^*$ by induction over odd $L$.  The case $L=1$ was just
checked.  Assume the identity at size $L-2$.  Lemma~\ref{lem:N-deletion} and
\eqref{eq:Nstar-deletion} show that $N_L-N_L^*$ vanishes at $x_L=q^2x_k$ for
every $k<L$.  Symmetry and inversion give the other points in
\eqref{eq:four-interpolation-points}.  Lemmas~\ref{lem:cleared-degree} and
\ref{lem:Nstar-degree} bound the powers of $x_L$ by $[-L,L]$.  Laurent
interpolation therefore yields $N_L=N_L^*$.  Division by $D_L$ gives
\eqref{eq:inhomogeneous-formula} as an identity of rational functions.
\end{proof}

\begin{remark}
The correction to the interpolation in
\cite[proof of Proposition~4.6]{IP} is the following.  We interpolate the
Laurent polynomial $N_L$, not the rational function $\widehat P_L$.
No zero or pole of the denominator is used as an interpolation value.
\end{remark}

\section{Homogeneous specialization and the strip exponent}
\label{sec:homogeneous}

Choose $w_0$ with $w_0^2=-q$ and set $z_1=\cdots=z_L=1$.  The two local
resolutions then both have probability $1/2$, and
\[
  \hp_L=\widehat P_L(w_0;1,\ldots,1).
\]
For $n\ge1$, let
\[
  A(n)=\prod_{j=0}^{n-1}\frac{(3j+1)!}{(n+j)!}.
\]
For $m\ge0$, define
\[
  A_V(2m+1)
  =\prod_{j=0}^{m-1}
  \frac{(3j+2)(6j+3)!(2j+1)!}{(4j+2)!(4j+3)!},
\]
and
\[
  N_8(2m)
  =\prod_{j=0}^{m-1}
  \frac{(3j+1)(6j)!(2j)!}{(4j)!(4j+1)!}.
\]
Empty products are one.

We next state the character identities used at the homogeneous point.  For
completeness, we fix the remaining notation.  If
$\mu_1\ge\cdots\ge\mu_N\ge0$, the Schur polynomial is
\begin{equation}\label{eq:def-schur-polynomial}
 s_\mu(y_1,\ldots,y_N)
 =\frac{\det\!\left(y_i^{\mu_j+N-j}\right)_{i,j=1}^N}
 {\det\!\left(y_i^{N-j}\right)_{i,j=1}^N}.
\end{equation}
If $\nu_1\ge\cdots\ge\nu_N\ge0$, the odd orthogonal character is
\begin{equation}\label{eq:def-odd-orthogonal-character}
 \operatorname{so}^{\mathrm{odd}}_\nu(u_1,\ldots,u_N)
 =\frac{
 \det\!\left(u_i^{\nu_j+N-j+1/2}
             -u_i^{-\nu_j-N+j-1/2}\right)_{i,j=1}^N}
 {\det\!\left(u_i^{N-j+1/2}
             -u_i^{-N+j-1/2}\right)_{i,j=1}^N}.
\end{equation}
The symplectic character $\operatorname{sp}_\lambda$ was defined in
\eqref{eq:def-symplectic-character}.  These determinant quotients are
symmetric polynomials or symmetric Laurent polynomials.  We use only the
identities written below.  We include the determinant reduction because it is
the only step involving the special value $-q$.

\begin{lemma}
\label{lem:character-specializations}
Let $L=2m-1$.  With the character convention of
Section~\ref{sec:qkz},
\begin{align}
 \chi_L(1^L)
 &=3^{(m-1)^2}N_8(L+1),
 \label{eq:chi-homogeneous-N8}\\
 \chi_{L+1}(-q,1^L)
 &=3^{(m-1)^2}\frac{A(L)}{A_V(L)}.
 \label{eq:chi-minus-q}
\end{align}
Here $1^r$ denotes $r$ variables all equal to one.
\end{lemma}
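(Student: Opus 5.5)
We plan to reduce both identities to known product formulas for classical characters at special arguments. The only step needing real care is the one involving $-q$. The overall strategy is: evaluate each determinant quotient \eqref{eq:def-symplectic-character} at (or near) the trivial point by a limiting argument, factor the result into a power of $3$ times a Schur or odd orthogonal character, and then use the classical enumerative evaluations. These evaluations are the Mills--Robbins--Rumsey/Kuperberg numbers $A(n)$, $A_V(2m+1)$ and $N_8(2m)$, which arise as $s_\lambda$ and $\operatorname{so}^{\mathrm{odd}}$ characters evaluated at cube roots of unity.

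For \eqref{eq:chi-homogeneous-N8} we will use the standard factorization of symplectic characters with the staircase-type highest weight $\lambda^{(M)}_j=\lfloor (M-j)/2\rfloor$ at $q^3=1$. This factorization is Okada's identity: $\operatorname{sp}_{\lambda^{(2n)}}$ and $\operatorname{sp}_{\lambda^{(2n-1)}}$ evaluated with each variable doubled as $\{y,q^{\pm}y\}$ factor through $\operatorname{so}^{\mathrm{odd}}$ and $s_\mu$ at $(1,q,q^2)$-type arguments. Equivalently, one can use the character formulas of \cite{IP} Section~4, where $\chi_L(1^L)$ is identified with $3^{(m-1)^2}$ times the number of cyclically symmetric transpose complement plane partitions, or vertically symmetric ASMs. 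Concretely, we will write $\chi_L(1^L)$ as the limit $y_i\to1$ of the determinant quotient. The Weyl dimension-type formula then applies, since at $y_i=1$ the symplectic character equals the dimension of the irreducible $\mathrm{Sp}(2M)$-representation of highest weight $\lambda^{(M)}$. Evaluating that dimension via the Weyl product $\prod_{i<j}\frac{(\ell_i-\ell_j)(\ell_i+\ell_j)}{(\rho_i-\rho_j)(\rho_i+\rho_j)}\prod_i\frac{\ell_i}{\rho_i}$, with $\ell=\lambda+\rho$, gives an explicit product. A telescoping comparison with the ratio $N_8(L+1)/N_8(L-1)$ should then yield \eqref{eq:chi-homogeneous-N8} by induction on $m$. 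We will verify the base case $m=1$ directly.

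For \eqref{eq:chi-minus-q} the dimension formula is unavailable, since one variable equals $-q$. Instead we will pass to the determinant: with $y_1=-q$ and $y_2,\dots,y_{L+1}\to1$, we perform the usual L'Hôpital reduction in the last $L$ rows. The rows then become derivatives of $y^{\ell_j}-y^{-\ell_j}$ at $y=1$, which are odd powers of $\ell_j$. The first row is $(-q)^{\ell_j}-(-q)^{-\ell_j}$, which, since $q^3=1$, depends only on $\ell_j \bmod 6$ and takes values in $\{0,\pm\sqrt{-3}\}$ up to sign. Expanding along the first row therefore expresses the numerator as a signed sum of Vandermonde-in-$\ell^2$ minors. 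The residue pattern of $\ell_j=\lambda^{(L+1)}_j+L+2-j$ is explicit, and the sum should collapse to a ratio of products. Those products we then match with $A(L)/A_V(L)$ up to $3^{(m-1)^2}$, again by checking the ratio between consecutive $m$.

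We expect this last step to be the main obstacle: controlling the cancellation in the first-row expansion and matching the resulting alternating sum with the product $A(L)/A_V(L)$. If the direct collapse fails, we will fall back on citing the known identity $\operatorname{sp}_{\lambda^{(2m)}}(-q,1^{2m-1})\propto A(2m-1)/A_V(2m-1)$ from \cite{IP} (Section~4, via Okada's character factorization). In that case we will only verify the power of $3$ from the normalization of the determinant \eqref{eq:def-symplectic-character}.
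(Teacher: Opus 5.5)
Your route to \eqref{eq:chi-homogeneous-N8} is the same as the paper's: the Weyl dimension formula \eqref{eq:sp-weyl-dimension} at the double-staircase weight, followed by a finite-product check. (A small correction: $N_8(2m)$ counts cyclically symmetric transpose-complement plane partitions; vertically symmetric ASMs are counted by $A_V$.) The gap is in \eqref{eq:chi-minus-q}. The step you flag as the main obstacle is the whole content of that identity, and neither of your two branches closes it. The shifted parts $\ell_j=\lambda^{(L+1)}_j+L+2-j$ are exactly the integers in $[1,3m-1]$ not divisible by $3$. Hence every first-row entry $(-q)^{\ell_j}-(-q)^{-\ell_j}$ equals $\pm\sqrt{-3}$, and the first-row expansion is a genuinely alternating sum of $L+1$ Vandermonde-type products. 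Nothing in the proposal explains why this sum should factor, let alone equal $3^{(m-1)^2}A(L)/A_V(L)$. The fallback cites an identity only up to proportionality, which leaves exactly the constant undetermined.

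The missing idea is not to evaluate $\chi_{L+1}(-q,\cdot)$ by itself, but to pair it with $\chi_{L+1}(q,\cdot)$. Put $\nu_L=(m-1,m-1,\dots,1,1,0)$, $\widetilde\nu_L=(\nu_L,0)=\lambda^{(L+1)}$ and $\mu_L=(L-1,L-1,\dots,1,1,0,0)$.

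\begin{itemize}
\item Since $(-q)+(-q)^{-1}=1$, a unitriangular column reduction in the Chebyshev basis gives $\operatorname{sp}_{\widetilde\nu_L}(-q,\boldsymbol u)=\operatorname{so}^{\mathrm{odd}}_{\nu_L}(\boldsymbol u)$.
\item Because $|\widetilde\nu_L|$ is even, it follows that $\operatorname{sp}_{\widetilde\nu_L}(q,\boldsymbol u)=\operatorname{so}^{\mathrm{odd}}_{\nu_L}(-\boldsymbol u)$.
\item The Ayyer--Behrend factorization therefore becomes $s_{\mu_L}(u_1^{\pm1},\dots,u_L^{\pm1})=\chi_{L+1}(q,\boldsymbol u)\,\chi_{L+1}(-q,\boldsymbol u)$.
\item At $\boldsymbol u=1$ the left side is the $GL_{2L}$ dimension $3^{L(L-1)/2}A(L)$.
\item The factor $\chi_{L+1}(q,1^L)$ is easy, because the pair $(q,1)$ lies, up to inversion, on the deletion locus of \eqref{eq:chi-deletion}. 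It equals $3^{L-1}\chi_{L-1}(1^{L-1})=3^{m(m-1)}A_V(L)$.
\end{itemize}

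Dividing, and using $L(L-1)/2-m(m-1)=(m-1)^2$, gives \eqref{eq:chi-minus-q}. The same column reduction would also rescue your determinant, turning it into the type-$B$ dimension $\operatorname{so}^{\mathrm{odd}}_{\nu_L}(1^L)$. You would then still owe a proof that this product equals $3^{(m-1)^2}A(L)/A_V(L)$.
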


\begin{proof}
We first present the finite dimension formulae.  For a partition
$\lambda=(\lambda_1,\ldots,\lambda_N)$,
\begin{equation}\label{eq:sp-weyl-dimension}
 \operatorname{sp}_{\lambda}(1^N)
 =\prod_{i=1}^N
   \frac{\lambda_i+N-i+1}{N-i+1}
  \prod_{1\le i<j\le N}
   \frac{\lambda_i-\lambda_j+j-i}{j-i}
   \frac{\lambda_i+\lambda_j+2N-i-j+2}
        {2N-i-j+2},
\end{equation}
and, for a partition $\mu$ of length at most $N$,
\begin{equation}\label{eq:schur-weyl-dimension}
 s_\mu(1^N)=
 \prod_{1\le i<j\le N}
 \frac{\mu_i-\mu_j+j-i}{j-i}.
\end{equation}
Substitution of the paired double-staircase parts into
\eqref{eq:sp-weyl-dimension} gives
\begin{align}
 \operatorname{sp}_{\lambda^{(2m-1)}}(1^{2m-1})
 &=3^{(m-1)^2}N_8(2m),
 \label{eq:odd-sp-dimension-evaluation}\\
 \operatorname{sp}_{\lambda^{(2m-2)}}(1^{2m-2})
 &=3^{(m-1)(m-2)}A_V(2m-1).
 \label{eq:even-sp-dimension-evaluation}
\end{align}
Likewise, for
\[
 \mu_L=(L-1,L-1,L-2,L-2,\ldots,1,1,0,0),
\]
formula \eqref{eq:schur-weyl-dimension} gives
\begin{equation}\label{eq:double-staircase-schur-dimension}
 s_{\mu_L}(1^{2L})=3^{L(L-1)/2}A(L).
\end{equation}
These are direct finite-product identities.  The same substitutions appear
in \cite[Section~5.3]{AB}.  Equations
\eqref{eq:odd-sp-dimension-evaluation} and
\eqref{eq:chi-homogeneous-N8} are the same.

It remains to evaluate the special argument $-q$.  Put
\[
 \nu_L=(m-1,m-1,m-2,m-2,\ldots,1,1,0),
 \qquad \widetilde\nu_L=(\nu_L,0)=\lambda^{(L+1)}.
\]
Since $|\nu_L|=m(m-1)$ is even, the double-staircase instance of
\cite[Theorem~1, Eq.~(18)]{AB} is
\begin{equation}\label{eq:AB-odd-orth-factorization}
 s_{\mu_L}(u_1,u_1^{-1},\ldots,u_L,u_L^{-1})
 =\operatorname{so}^{\mathrm{odd}}_{\nu_L}(\boldsymbol u)\,
  \operatorname{so}^{\mathrm{odd}}_{\nu_L}(-\boldsymbol u).
\end{equation}
We now convert the two odd-orthogonal factors into the symplectic factors
needed here.

For $y=x+x^{-1}$, let $\mathcal U_k(y)$ be defined by
\[
 \mathcal U_{-1}=0,\qquad \mathcal U_0=1,
 \qquad \mathcal U_{k+1}=y\mathcal U_k-\mathcal U_{k-1}.
\]
Thus
\[
 \mathcal U_k(x+x^{-1})
 =\frac{x^{k+1}-x^{-k-1}}{x-x^{-1}}.
\]
Set $\mathcal P_k=\mathcal U_k+\mathcal U_{k-1}$. Then
\begin{equation}\label{eq:P-polynomial-odd-orth}
 \mathcal P_k(x+x^{-1})
 =\frac{x^{k+1/2}-x^{-k-1/2}}
        {x^{1/2}-x^{-1/2}}.
\end{equation}
The Weyl bialternant for
$\operatorname{sp}_{\widetilde\nu_L}(-q,u_1,\ldots,u_L)$ has the fixed
variable
\[
 (-q)+(-q)^{-1}=1
\]
and numerator column degrees
\[
 \{3r,3r+1:0\le r\le m-1\}
\]
in the monic polynomial basis $\mathcal U_k$.  Perform the same column operations in the numerator and denominator.
Subtract from each nonconstant column its value at $y=1$ times the constant
column.  Expand along the fixed row, then divide the other rows by $y_i-1$.  The numerator basis which remains is converted to the
$\mathcal P$-basis by the identities
\begin{align}
 \frac{\mathcal U_{3r}(y)-(-1)^r}{y-1}
 &=\sum_{s=1}^{r}(-1)^{r-s}\mathcal P_{3s-1}(y),
 &&1\le r\le m-1,
 \label{eq:U-to-P-even}\\
 \frac{\mathcal U_{3r+1}(y)-(-1)^r}{y-1}
 &=\sum_{s=0}^{r}(-1)^{r-s}\mathcal P_{3s}(y),
 &&0\le r\le m-1.
 \label{eq:U-to-P-odd}
\end{align}
Both identities follow from the recurrence for $\mathcal U_k$.  When the
polynomials are ordered by degree, the two changes of basis are unitriangular.  The resulting
set of numerator degrees is
\[
 \{0,2,3,5,6,\ldots,3m-4,3m-3\},
\]
This is the set of degrees in the odd-orthogonal numerator for $\nu_L$;
see \eqref{eq:P-polynomial-odd-orth}.  In the denominator, both sides are
determinants of monic bases of degrees $0,1,\ldots,L-1$.  They are therefore
the same Vandermonde determinant.
Therefore
\begin{equation}\label{eq:root-conversion-minus-q}
 \operatorname{sp}_{\widetilde\nu_L}(-q,\boldsymbol u)
 =\operatorname{so}^{\mathrm{odd}}_{\nu_L}(\boldsymbol u).
\end{equation}
The substitution $x_i\mapsto-x_i$ multiplies the symplectic character by
$(-1)^{|\widetilde\nu_L|}=1$.  Apply
\eqref{eq:root-conversion-minus-q} to $-\boldsymbol u$.  This gives
\begin{equation}\label{eq:root-conversion-plus-q}
 \operatorname{sp}_{\widetilde\nu_L}(q,\boldsymbol u)
 =\operatorname{so}^{\mathrm{odd}}_{\nu_L}(-\boldsymbol u).
\end{equation}
Combining \eqref{eq:AB-odd-orth-factorization},
\eqref{eq:root-conversion-minus-q}, and
\eqref{eq:root-conversion-plus-q} proves the finite determinant factorization
\begin{equation}\label{eq:double-staircase-factorization}
 s_{\mu_L}(u_1,u_1^{-1},\ldots,u_L,u_L^{-1})
 =\chi_{L+1}(q,\boldsymbol u)\,
  \chi_{L+1}(-q,\boldsymbol u).
\end{equation}

Set $u_1=\cdots=u_L=1$.  By
\eqref{eq:double-staircase-schur-dimension}, the left-hand side equals
$3^{L(L-1)/2}A(L)$.  Now apply \eqref{eq:chi-deletion} to the pair $(q,1)$.  Since $L-1$ is even and
$k(1,1)=[q]^2=-3$,
\[
 \chi_{L+1}(q,1^L)
 =3^{L-1}\chi_{L-1}(1^{L-1})
 =3^{m(m-1)}A_V(L),
\]
where the second equality is
\eqref{eq:even-sp-dimension-evaluation}.  Divide \eqref{eq:double-staircase-factorization} by this value.  The identity
$L(L-1)/2-m(m-1)=(m-1)^2$ then gives \eqref{eq:chi-minus-q}.
\end{proof}

Theorem~\ref{thm:inhomogeneous} and
Lemma~\ref{lem:character-specializations} now give the exact homogeneous
formula.

\begin{theorem}\label{thm:homogeneous}
For every odd $L$,
\begin{equation}\label{eq:homogeneous-formula}
  \hp_L
  =\frac{3}{4^L}
  \frac{A(L)^2}{N_8(L+1)^2A_V(L)^2}.
\end{equation}
\end{theorem}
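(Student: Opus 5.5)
The plan is to specialize Theorem~\ref{thm:inhomogeneous} at the homogeneous point $w=w_0$, $w_0^2=-q$, $z_1=\cdots=z_L=1$, using Proposition~\ref{prop:stochastic-identification} to identify the left-hand side with $\hp_L$. Since \eqref{eq:inhomogeneous-formula} is an identity of rational functions, I first check that the denominator does not vanish at this point; then both sides may be evaluated there. The factor $k(1/w_0,1)$ is nonzero (computed below). The value $\chi_L(1^L)=3^{(m-1)^2}N_8(L+1)$ is a positive number by \eqref{eq:chi-homogeneous-N8}, since $N_8$ is a product of positive factorial ratios. Alternatively, the equality $N_L=N_L^*$ is an identity of Laurent polynomials, so it holds pointwise, and one divides by $D_L$ at the point.

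Next I evaluate the arguments of the two character factors. One has $w_0^2=-q$, and also $(q/w_0)^2=q^2/(-q)=-q$. So both factors in the numerator equal $\chi_{L+1}(-q,1^L)$, and the character part of \eqref{eq:inhomogeneous-formula} becomes
\[
 \frac{\chi_{L+1}(-q,1^L)^2}{\chi_L(1^L)^2}
 =\frac{3^{2(m-1)^2}A(L)^2/A_V(L)^2}{3^{2(m-1)^2}N_8(L+1)^2}
 =\frac{A(L)^2}{N_8(L+1)^2A_V(L)^2},
\]
by \eqref{eq:chi-homogeneous-N8} and \eqref{eq:chi-minus-q}. The powers of $3$ cancel exactly.

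It remains to compute the scalar prefactor $-[q][w_0^2]/k(1/w_0,1)^L$.
\begin{itemize}
\item Numerator: $[w_0^2]=[-q]=-q+q^{-1}=-[q]$, so $-[q][w_0^2]=[q]^2=(q-q^{-1})^2=(i\sqrt3)^2=-3$.
\item Denominator: $k(1/w_0,1)=[qw_0][qw_0]$. The computation in the proof of Lemma~\ref{lem:stochastic-ergodic} gives $[qw_0]=2qw_0$, hence $k(1/w_0,1)=4q^2w_0^2=4q^2(-q)=-4$.
\item Since $L$ is odd, $k(1/w_0,1)^L=-4^L$.
\end{itemize}
The prefactor is therefore $(-3)/(-4^L)=3/4^L$. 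Combining it with the character ratio gives \eqref{eq:homogeneous-formula}.

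The main point requiring care is the sign bookkeeping in this prefactor: the choice of square root $w_0$ (either sign gives the same $w_0^2$, and only $w_0^2$ and $[qw_0]^2$ enter), and the parity of $L$. As a sanity check, I would verify $L=1$: there $A(1)=N_8(2)=A_V(1)=1$, so the formula gives $\hp_1=3/4$. This should agree with the four-configuration computation $N_1=-[q][w^2]$ divided by $k(1/w_0,1)=-4$, which gives $-3/(-4)=3/4$.
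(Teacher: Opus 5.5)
Your proposal is correct and follows the paper's route. The paper obtains Theorem~\ref{thm:homogeneous} by specializing Theorem~\ref{thm:inhomogeneous} at $w_0^2=-q$, $z_1=\cdots=z_L=1$ and inserting the two evaluations of Lemma~\ref{lem:character-specializations}. It leaves implicit both the prefactor arithmetic $-[q][w_0^2]/k(1/w_0,1)^L=(-3)/(-4)^L=3/4^L$ and the nonvanishing of $D_L$ at that point; you carry out both correctly.
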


\begin{proposition}\label{prop:recurrence}
Put $p_m=\hp_{2m-1}$.  Then, for every $m\ge1$,
\[
  \frac{p_{m+1}}{p_m}
  =\left(\frac{6m+1}{6m+2}\right)^2,
\]
and hence
\[
  p_m=\frac34\prod_{j=1}^{m-1}
  \left(\frac{6j+1}{6j+2}\right)^2.
\]
\end{proposition}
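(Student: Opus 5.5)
The plan is to compute the ratio $p_{m+1}/p_m$ directly from the closed form in Theorem~\ref{thm:homogeneous}. The formula is a product of factorial ratios, so the ratio at consecutive odd widths $L=2m-1$ and $L+2=2m+1$ should telescope to a simple rational function of $m$. Write
\[
 \frac{p_{m+1}}{p_m}
 =\frac{1}{16}
 \left(\frac{A(2m+1)}{A(2m-1)}\right)^2
 \left(\frac{N_8(2m)}{N_8(2m+2)}\right)^2
 \left(\frac{A_V(2m-1)}{A_V(2m+1)}\right)^2 .
\]
Everything sits inside a square apart from the factor $4^{-2}$. It therefore suffices to show that the unsquared quantity
\[
 \frac14\,\frac{A(2m+1)}{A(2m-1)}\,\frac{N_8(2m)}{N_8(2m+2)}\,\frac{A_V(2m-1)}{A_V(2m+1)}
\]
equals $(6m+1)/(6m+2)$. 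Positivity is automatic because all factors are positive.

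The last two ratios are single factors read off from the definitions with $j=m-1$ and $j=m$ respectively:
\[
 \frac{N_8(2m+2)}{N_8(2m)}=\frac{(3m+1)(6m)!(2m)!}{(4m)!(4m+1)!},\qquad
 \frac{A_V(2m+1)}{A_V(2m-1)}=\frac{(3m-1)(6m-3)!(2m-1)!}{(4m-2)!(4m-1)!}.
\]
For $A(n)=\prod_{j=0}^{n-1}(3j+1)!/(n+j)!$, I will use the standard consecutive ratio
\[
 \frac{A(n+1)}{A(n)}=\frac{(3n+1)!\,n!}{(2n)!\,(2n+1)!}.
\]
This follows by comparing numerators, which differ by the new factor $(3n+1)!$, and denominators, which shift from $\prod_{j}(n+j)!$ to $\prod_j(n+1+j)!$. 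Applying it twice, at $n=2m-1$ and $n=2m$, gives
\[
 \frac{A(2m+1)}{A(2m-1)}=\frac{(6m-2)!\,(2m-1)!\,(6m+1)!\,(2m)!}{(4m-2)!\,(4m-1)!\,(4m)!\,(4m+1)!}.
\]

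Combining these, the factorials $(4m-2)!,(4m-1)!,(4m)!,(4m+1)!,(2m)!,(2m-1)!$ all cancel. What remains is
\[
 \frac14\cdot\frac{(6m-2)!\,(6m+1)!}{(6m)!\,(6m-3)!\,(3m+1)(3m-1)}
 =\frac{(6m-2)(6m+1)}{4(3m+1)(3m-1)}
 =\frac{6m+1}{6m+2},
\]
using $(6m-2)!/(6m-3)!=6m-2$, $(6m+1)!/(6m)!=6m+1$, and $(6m-2)/(3m-1)=2$. Squaring gives the claimed ratio.

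The product formula then follows by induction from the base case $p_1=\hp_1$. From Theorem~\ref{thm:homogeneous} with $L=1$, $m=1$, we have $A(1)=1$, $N_8(2)=1$ and $A_V(1)=1$, so $p_1=3/4$. As a check, Theorem~\ref{thm:inhomogeneous} at $L=1$ gives $-[q][w_0^2]/k(1/w_0,1)$, which should also be $3/4$.

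The only step needing care is the bookkeeping in the ratio $A(n+1)/A(n)$, in particular the shifted denominator products. I would verify it numerically at $n=1,2$ ($A(2)=2$, $A(3)=7$) before trusting the cancellation. Everything else is a routine factorial cancellation.
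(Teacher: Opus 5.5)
Your proposal is correct and matches the paper's argument: the ratio of the closed form in Theorem~\ref{thm:homogeneous} at widths $2m+1$ and $2m-1$ reduces to $(p_{m+1}/p_m)^{1/2}=\frac14\frac{(6m-2)(6m+1)}{(3m-1)(3m+1)}=\frac{6m+1}{6m+2}$, and $p_1=3/4$ starts the iteration. One cosmetic slip: the new factor in $N_8(2m+2)/N_8(2m)$ is the $j=m$ term and the one in $A_V(2m+1)/A_V(2m-1)$ is the $j=m-1$ term, so your labels are reversed, though the displayed formulas are right.
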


\begin{proof}
Take the ratio of \eqref{eq:homogeneous-formula} at widths $2m+1$ and
$2m-1$.  The three product definitions then give
\[
  \left(\frac{p_{m+1}}{p_m}\right)^{1/2}
  =\frac14\frac{(6m-2)(6m+1)}{(3m-1)(3m+1)}
  =\frac{6m+1}{6m+2}.
\]
Since $p_1=3/4$, iteration gives the product.
\end{proof}

\begin{corollary}\label{cor:strip}
There are constants $0<c_0<C_0<\infty$ such that, for every odd $L$,
\[
  c_0L^{-1/3}\le\hp_L\le C_0L^{-1/3}.
\]
\end{corollary}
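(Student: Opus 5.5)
The plan is to start from the product formula of Proposition~\ref{prop:recurrence},
\[
 p_m=\frac34\prod_{j=1}^{m-1}\Bigl(\frac{6j+1}{6j+2}\Bigr)^2,
\]
and extract its asymptotics. First I would rewrite each factor as $1-\frac{1}{6j+2}$ and take logarithms:
\[
 \log p_m=\log\tfrac34+2\sum_{j=1}^{m-1}\log\Bigl(1-\frac1{6j+2}\Bigr).
\]
Using $\log(1-t)=-t+O(t^2)$ for $0<t\le 1/8$, the summand is $-\frac{1}{6j+2}+O(j^{-2})$. The error terms form an absolutely convergent series, so they contribute a bounded quantity uniformly in $m$.

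Next I would compare $\sum_{j=1}^{m-1}\frac{1}{6j+2}$ with $\frac16\sum_{j=1}^{m-1}\frac1j=\frac16\log m+O(1)$. The difference
\[
 \frac{1}{6j}-\frac{1}{6j+2}=O(j^{-2})
\]
is again summable. Combining these gives
\[
 \log p_m=-\frac13\log m+O(1)
\]
uniformly in $m\ge1$. A cleaner alternative is the Gamma-function form
\[
 p_m=\frac34\Bigl(\frac{\Gamma(m+1/6)\,\Gamma(4/3)}{\Gamma(7/6)\,\Gamma(m+1/3)}\Bigr)^2
\]
together with $\Gamma(m+a)/\Gamma(m+b)\sim m^{a-b}$. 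Either route yields $p_m\asymp m^{-1/3}$.

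Finally, since $L=2m-1$, we have $m\le L\le 2m$, so $m^{-1/3}$ and $L^{-1/3}$ differ by at most the factor $2^{1/3}$. It remains to check that the implied constants are uniform down to $m=1$. Every factor in the product is positive and the $O(1)$ bounds hold for all $m$, so this is automatic and produces explicit $c_0,C_0$.

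I do not expect a genuine obstacle here. The only care needed is to keep the error bounds uniform over all $m$, rather than only asymptotic, so that a single pair of constants works for every odd $L$.
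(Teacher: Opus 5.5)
Your proof is correct and is essentially the paper's own argument. Like the paper, you take logarithms in the product formula of Proposition~\ref{prop:recurrence}, show $2\sum_{j=1}^{m-1}\log\bigl(1-\frac{1}{6j+2}\bigr)=-\frac13\log m+O(1)$ uniformly in $m$, and then pass from $m$ to $L=2m-1$, and the paper also records the same Gamma-function form as an equivalent route.
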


\begin{proof}
The product recurrence gives
\[
  \log p_m
  =\log\frac34
  +2\sum_{j=1}^{m-1}\log\left(1-\frac1{6j+2}\right)
  =-\frac13\log m+O(1).
\]
Exponentiating and using $L=2m-1$ proves the claim.  Equivalently,
\[
  p_m=\frac34
  \left[
  \frac{\Gamma(4/3)}{\Gamma(7/6)}
  \frac{\Gamma(m+1/6)}{\Gamma(m+1/3)}
  \right]^2.
\]
\end{proof}

\section{From the reflecting edge to an open strip connection}
\label{sec:local-surgery}

We now describe the reflecting edge $e_b$ in square-lattice coordinates.  This makes
the relation between the loop picture and the primal--dual percolation picture
explicit.

Let
\[
 \mathbb L_\diamond
 =\{(x,y)\in\Z^2:x+y\text{ is even}\}
\]
with edges joining $(x,y)$ to $(x+1,y\pm1)$ and their reversals.  This is the
ordinary square lattice rotated by $\pi/4$ and rescaled.  Its dual lattice is
\[
 \mathbb L_\diamond^*
 =\{(x,y)\in\Z^2:x+y\text{ is odd}\},
\]
with the same edge directions.  In this picture, the free boundary is
$x=0$.  The wired boundary is at $x=a_L$, where $a_L\asymp L$.  The comparison
constant depends only on the width assigned to one transfer-matrix column.

Choose the global primal/dual labelling so that the reflecting edge $e_b$
surrounds the primal boundary vertex
\[
 U_b=(0,0).
\]
The two primal edges incident to $U_b$ and entering the strip have midpoints
\[
 m_-=(1/2,-1/2),\qquad m_+=(1/2,1/2).
\]
The curved medial edge $e_b$ joins $m_-$ to $m_+$ around $U_b$ in the
half-plane $x<1/2$.  It is the reflecting edge $e_b$ defined in Section~2.  The dual boundary
vertex $(0,-1)$ belongs to the deterministic dual cluster carried by the free
side.  With this convention, the hull $\ga_L$ is the Dobrushin interface.  The wired
primal cluster lies on its primal side.  The free-boundary dual cluster lies
on its dual side.  Let $W_L$
be the deterministic primal cluster on the opposite wired side, and put
\begin{equation}\label{eq:def-Theta}
 \Theta_L=\Pro(U_b\longleftrightarrow W_L).
\end{equation}

\begin{remark}
Translation by $\tau(x,y)=(x,y-1)$ maps
$\mathbb L_\diamond$ onto $\mathbb L_\diamond^*$ and preserves $x=0$.
Thus the other boundary convention is obtained by applying $\tau$ and
exchanging primal-open with dual-open.  In the usual axis-aligned drawing
of $\Z^2$, this is the familiar $(1/2,1/2)$ primal--dual shift.  We fix
the primal convention above throughout the proof.
\end{remark}

\begin{lemma}
\label{lem:finite-local-surgery}
There are a fixed boundary rectangle $B$, a finite set $\mathcal X$ of
oriented exit bonds, and an integer $M<\infty$.  They do not depend on $L$.
Each bond has the form $f=(u_f,u_f')$, with $u_f\in B$ and $u_f'\notin B$.
For every $f\in\mathcal X$, there is an event $\mathcal T_f$ determined by at
most $M$ bonds inside $B$.  It has the following properties.

\begin{enumerate}\renewcommand{\labelenumi}{(\roman{enumi})}
\item Conditional on the exterior bond configuration,
$\Pro(\mathcal T_f\mid\mathcal F_{B^c})\ge2^{-M}$.
\item Suppose that $f$ is open and that $u_f'$ is connected to $W_L$ without
using a bond whose two endpoints lie in $B$.  Then $\mathcal T_f$ forces
$e_b\subseteq\ga_L$.
\item If $e_b\subseteq\ga_L$, then $U_b\longleftrightarrow W_L$.
\end{enumerate}
\end{lemma}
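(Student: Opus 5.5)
The plan is to identify the hull $\ga_L$ through its boundary characterization and then perform an explicit local construction near $U_b$. In the Dobrushin setting fixed above, the wired primal cluster lies on the primal side of $\ga_L$ and the free-side dual cluster on its dual side. A medial edge lies on $\ga_L$ exactly when the primal vertex it surrounds belongs to the primal cluster of $W_L$ and the dual vertex on its other side belongs to the dual cluster of the free boundary. For $e_b$, the surrounded primal vertex is $U_b$. The dual vertices adjacent to $e_b$ are boundary dual vertices such as $(0,\pm1)$, and these lie in the deterministic free-side dual cluster. Hence
\[
 e_b\subseteq\ga_L \iff U_b\longleftrightarrow W_L,
\]
up to the bookkeeping of the reflecting column. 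First I will check this equivalence by tracing the medial edge $e_b$: it is the curve separating $U_b$ from the dual boundary points $(0,-1)$ and $(0,1)$. The direction ``$\Rightarrow$'' gives (iii) immediately. The loop through $e_b$ bounds the primal cluster of $U_b$. If that cluster did not contain $W_L$, the loop would be a finite closed loop, or it would be blocked by a dual path from the free side to the free side around the cluster. In either case it could not be the unique bi-infinite interface.

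For (i) and (ii), I take $B$ to be a fixed box of side, say, $4$ around $U_b$ in $\mathbb L_\diamond$, intersected with $\{x\ge0\}$. I take $\mathcal X$ to be the finite set of primal bonds leaving $B$ into the strip. For each $f=(u_f,u_f')$, I fix a deterministic self-avoiding primal path $\pi_f$ inside $B$ from $U_b$ to $u_f$, and let $\mathcal T_f$ be the event that every bond of $\pi_f$ is open. Then $M=\max_f|\pi_f|$ is bounded independently of $L$. Since the bonds inside $B$ are independent of $\mathcal F_{B^c}$, we get $\Pro(\mathcal T_f\mid\mathcal F_{B^c})=2^{-|\pi_f|}\ge2^{-M}$, which proves (i). For (ii), suppose $f$ is open and $u_f'$ is joined to $W_L$ outside $B$. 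Then $\mathcal T_f$ yields a primal path $U_b\to u_f\to u_f'\to W_L$, so $U_b\longleftrightarrow W_L$. The reverse direction of the equivalence then gives $e_b\subseteq\ga_L$.

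The main obstacle is the reverse implication: that $U_b\longleftrightarrow W_L$ forces $e_b$ onto $\ga_L$ rather than onto some loop. To prove it, I will use the fact that on the left of $U_b$ there are only the boundary dual vertices, all in the free-side dual cluster. The medial edge $e_b$ therefore separates the wired primal cluster (through $U_b$) from the free dual cluster. The exploration interface started along $e_b$ thus has the wired primal cluster on one side and the free dual cluster on the other for its entire length. Neither cluster is finite: $W_L$ is infinite, and the free dual boundary is infinite. So the interface cannot close up, and it is bi-infinite. By uniqueness of the unpaired interface (Section 2), it equals $\ga_L$. If the medial-lattice conventions near the reflecting column make ``$U_b$'' slightly ambiguous, I will absorb the ambiguity by enlarging $B$ and requiring $\mathcal T_f$ to also open the boundary bond(s) between $U_b$ and the first transfer-matrix column. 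This keeps $M$ fixed.
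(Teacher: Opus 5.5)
Your proof is correct, but for (i)--(ii) it takes a different and more economical route than the paper.

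The paper's construction works in $B=\{0\le x\le 6,\ -4\le y\le 4\}$ and builds a corridor. It forces open a primal path $P_f$ from $U_b$ to $u_f$. It forces closed every bond crossed by an adjacent dual path $P_f^*$ starting at the free-boundary dual vertex $(0,-1)$. It also prescribes a few bonds near the exit. The interlace through $e_b$ is thereby trapped between an open path and a dual-open path. Then (ii) follows because this component separates the wired primal cluster from the free-side dual cluster, so it cannot be a contractible loop.

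You instead use, in both directions, the fact that the dual side of the boundary edge $e_b$ is deterministically the free-side dual cluster. The paper invokes exactly this in its proof of (iii). This gives the identity $\{e_b\subseteq\gamma_L\}=\{U_b\longleftrightarrow W_L\}$ up to a null set. Your reverse inclusion is the same non-contractibility argument the paper uses for (ii). Consequently $\mathcal T_f$ only has to open a primal path, and $M$ is smaller. The closed dual fence and the somewhat informal terminal prescriptions disappear.

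Your route also gives more than the lemma asks: it shows $\widehat p_L=\Theta_L$. So Lemma~\ref{lem:localcomparison} holds with $c_1=C_1=1$, and the conditioning on $\mathcal F_{B^c}$ is not needed. The paper's corridor additionally pins down where $\gamma_L$ runs inside $B$, but nothing later uses that information.

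One small point: your sketch of the forward direction (``blocked by a dual path from the free side to the free side'') is loose. It is cleaner to say, as the paper asserts, that $\gamma_L$ has $W_L$ on its primal side. Alternatively, almost surely $W_L$ is the only infinite primal cluster in the strip. Then, if $U_b$ is not connected to $W_L$, its cluster is finite and the curve through $e_b$ is a closed loop.
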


\begin{figure}[ht]
\centering
\begin{tikzpicture}[x=.78cm,y=.78cm,>=Latex]
  \foreach \x in {0,...,6}{
    \foreach \y in {-4,...,4}{
      \pgfmathtruncatemacro{\parity}{mod(\x+\y+20,2)}
      \ifnum\parity=0
        \foreach \dy in {-1,1}{
          \pgfmathtruncatemacro{\xx}{\x+1}
          \pgfmathtruncatemacro{\yy}{\y+\dy}
          \ifnum\xx<7
            \draw[gray!32,very thin] (\x,\y)--(\xx,\yy);
          \fi
        }
        \fill[gray!45] (\x,\y) circle (1.1pt);
      \fi
    }
  }
  \draw[very thick] (0,-4.35)--(0,4.35);
  \draw[dashed,thick] (6.15,-4.35)--(6.15,4.35);
  \node[rotate=90,anchor=south] at (-.28,2.8) {free boundary $x=0$};
  \node[rotate=90,anchor=north] at (6.38,2.8) {$\partial B$};

  \draw[red!70!black,very thick]
     (.5,-.5) .. controls (-.72,-.48) and (-.72,.48) .. (.5,.5);
  \fill[red!70!black] (.5,-.5) circle (1.8pt);
  \fill[red!70!black] (.5,.5) circle (1.8pt);
  \node[red!70!black,anchor=east] (eblab) at (-1.20,.62) {$e_b$};
  \draw[red!70!black,thin,->] (eblab.east)--(-.66,.12);
  \fill (0,0) circle (2.4pt);
  \node[anchor=east] (ublab) at (-1.08,-.20) {$U_b$};
  \draw[thin,->] (ublab.east)--(-.08,-.02);

  \draw[black,very thick]
    (0,0)--(1,1)--(2,0)--(3,1)--(4,0)--(5,1)--(6,0);
  \foreach \p in {(0,0),(1,1),(2,0),(3,1),(4,0),(5,1),(6,0)}
    \fill \p circle (2.2pt);
  \node[anchor=south] at (3,1.28) {$P$ (open)};
  \node[anchor=west] at (6.2,0) {$f$};

  \draw[blue!65!black,very thick,densely dashed]
    (0,-1)--(1,0)--(2,-1)--(3,0)--(4,-1)--(5,0)--(6,-1);
  \foreach \p in {(0,-1),(1,0),(2,-1),(3,0),(4,-1),(5,0),(6,-1)}
    \draw[blue!65!black,fill=white,thick] \p circle (2.1pt);
  \node[blue!65!black,anchor=north] at (3,-1.28)
       {$P^*$ (dual-open)};
  \node[blue!65!black,anchor=east] at (-.12,-1) {$u_0^*$};

  \draw[red!70!black,thick,dotted]
    (.5,-.5)--(.5,.5)--(1.5,.5)--(1.5,-.5)--(2.5,-.5)--(2.5,.5)
    --(3.5,.5)--(3.5,-.5)--(4.5,-.5)--(4.5,.5)--(5.5,.5)--(5.5,-.5);
  \node[red!70!black,anchor=south] at (4.72,.72) {$\ga$};
\end{tikzpicture}
\caption{The local construction in rotated square-lattice coordinates.
Primal vertices have even parity and dual vertices have odd parity.  The
edge $e_b$ joins the two bond midpoints next to $U_b$.  The solid path
$P$ is open.  The bonds crossed by the dashed dual path $P^*$ are closed.  The
red medial interlace lies between these two paths and begins with $e_b$.}
\label{fig:local-surgery}
\end{figure}

\begin{proof}
Take
\[
 B=\{(x,y):0\le x\le6,\ -4\le y\le4\}
\]
and consider the primal bonds of $\mathbb L_\diamond$ that meet this
rectangle.  Bonds crossing $\partial B$ are included in the outside
sigma-field.  Only bonds with both endpoints in $B$ will be changed.

We first treat the straight exit shown in
Figure~\ref{fig:local-surgery}.  Put $\varepsilon_j=0$ for even $j$ and
$\varepsilon_j=1$ for odd $j$, and define
\[
 u_j=(j,\varepsilon_j)\in\mathbb L_\diamond,
 \qquad
 u_j^*=(j,\varepsilon_j-1)\in\mathbb L_\diamond^*,
 \qquad 0\le j\le6.
\]
Force every primal bond
\begin{equation}\label{eq:forced-open-stem}
 P=\bigl\{\{u_j,u_{j+1}\}:0\le j\le5\bigr\}
\end{equation}
to be open.  Force every primal bond crossed by a dual edge in
\begin{equation}\label{eq:forced-closed-fence}
 P^*=\bigl\{\{u_j^*,u_{j+1}^*\}:0\le j\le5\bigr\}
\end{equation}
to be closed.  The two prescribed primal bond sets are disjoint.  The first
dual vertex $u_0^*=(0,-1)$ is on the free boundary, so $P^*$ is attached to
the deterministic free-side dual cluster.

The first open bond of $P$ is
$\{(0,0),(1,1)\}$ and the first closed bond crossed by $P^*$ is
$\{(0,0),(1,-1)\}$.  Their midpoints are $m_+$ and $m_-$, respectively.
The two local medial resolutions now show that the interlace between $P$ and
$P^*$ contains the exterior edge joining $m_-$ to $m_+$.  This edge is $e_b$.
The two paths are separated by one lattice spacing. Hence the interface is forced to pass through the region between them and leaves \(B\) between the two terminal half-edges.

Let $\mathcal X$ be the set of oriented primal bonds crossing the three
non-free sides of $B$.  Orient them from $B$ to its exterior, and include them
in $\mathcal F_{B^c}$.  Fix $f=(u_f,u_f')\in\mathcal X$.  Choose a simple
primal path $P_f$ from $U_b$ to $u_f$.  It begins with
$\{(0,0),(1,1)\}$ and otherwise avoids the free boundary.  Take the union of the diamond faces next to $P_f$ on its free-boundary
side.  One component of the dual boundary is a nearest-neighbour dual path
$P_f^*$.  It begins with the dual edge crossing
$\{(0,0),(1,-1)\}$.  After adding one fixed layer to \(B\), its terminal edge can be chosen adjacent to the side of \(f\) through which the exploration path exits. Thus, \(P_f\) and \(P_f^*\) are disjoint and bound a region of width one extending from the reflecting edge to this exit side.
This is a finite planar construction because $\mathcal X$ is finite.

Declare the bonds of $P_f$ open and the bonds crossed by $P_f^*$ closed.
Also prescribe the finitely many bonds mear the terminal point.  Choose them so
that the medial interlace leaves between $P_f$ and $P_f^*$.  Along the interlace, there is an open primal path on one side and a
dual-open path on the other. It cannot cross either path or leave the region between them.
This defines $\mathcal T_f$.  Since $\mathcal X$ is finite, the number of
prescribed bonds is at most a fixed integer $M$.  Independence gives (i).

Assume the condition in (ii).  On $\mathcal T_f$, the primal path belongs
to the wired primal cluster.  The dual path belongs to the fixed dual cluster
on the free side.  A loop component
separating clusters attached to the two opposite side boundaries cannot be a
contractible loop.  The separating component just identified is therefore
the unique infinite hull $\ga_L$, and it contains $e_b$.  This proves
(ii).

Finally, the convention fixes the two sides of $e_b$.  The primal side
contains $U_b$.  The dual side contains the free-boundary dual cluster.  If
$e_b\subseteq\ga_L$, its primal-side cluster is the opposite wired
cluster.  Hence $U_b\longleftrightarrow W_L$, proving (iii).
\end{proof}

\begin{lemma}
\label{lem:localcomparison}
There exist constants $0<c_1<C_1<\infty$, independent of odd $L$, such that
\begin{equation}
\label{eq:localcomparison}
  c_1\Theta_L
  \le
  \hp_L
  \le
  C_1\Theta_L.
\end{equation}
\end{lemma}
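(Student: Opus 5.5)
The upper bound is immediate from Lemma~\ref{lem:finite-local-surgery}(iii). The event $H_L=\{e_b\subseteq\ga_L\}$ is contained in $\{U_b\longleftrightarrow W_L\}$, so $\hp_L\le\Theta_L$, and we may take $C_1=1$ (or any constant at least one). All the work is in the lower bound $\hp_L\ge c_1\Theta_L$, which I would prove by a finite-energy surgery built from parts (i) and (ii) of the same lemma.

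For the lower bound, I would decompose $\{U_b\longleftrightarrow W_L\}$ according to the exit bond through which an open path leaves the fixed rectangle $B$. Since $W_L$ lies at distance $a_L\asymp L$ from the free side, for $L$ larger than a fixed constant any open path from $U_b$ to $W_L$ must leave $B$. Consider the last time such a path uses a bond with an endpoint in $B$. That bond $f=(u_f,u_f')$ lies in $\mathcal X$, it is open, and $u_f'$ is connected to $W_L$ by a path avoiding bonds with both endpoints in $B$. Let $E_f$ be the event that $f$ is open and $u_f'$ connects to $W_L$ without using interior bonds of $B$. Then
\[
\{U_b\longleftrightarrow W_L\}\subseteq\bigcup_{f\in\mathcal X}E_f,
\qquad\text{so}\qquad
\Theta_L\le\sum_{f\in\mathcal X}\Pro(E_f)\le|\mathcal X|\max_f\Pro(E_f).
\]
Each $E_f$ is measurable with respect to $\mathcal F_{B^c}$, because $f$ itself was placed in the outside sigma-field. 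Fix $f$ attaining the maximum. By Lemma~\ref{lem:finite-local-surgery}(ii), $E_f\cap\mathcal T_f\subseteq H_L$. By part (i) and conditioning on $\mathcal F_{B^c}$,
\[
\hp_L\ge\Pro(E_f\cap\mathcal T_f)=\mathbb E\bigl[\mathbf 1_{E_f}\Pro(\mathcal T_f\mid\mathcal F_{B^c})\bigr]\ge2^{-M}\Pro(E_f)\ge\frac{2^{-M}}{|\mathcal X|}\Theta_L .
\]
This gives $c_1=2^{-M}/|\mathcal X|$, which is independent of $L$.

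Two points need care. The first is small $L$, where $B$ may not fit inside the strip or $W_L$ may meet $B$. There are only finitely many such odd $L$. For each of them $\hp_L>0$ by Theorem~\ref{thm:homogeneous}, and $\Theta_L\le1$, so shrinking $c_1$ covers them. The second, and the step I expect to be the main obstacle, is making the measurability and last-exit claims rigorous. One must check that the exterior-connection condition in (ii) is exactly the event $E_f$ produced by the last-exit decomposition; the fact that a path re-entering $B$ is harmless is precisely why the last exit is used. One must also check that the interior prescriptions of $\mathcal T_f$ do not conflict with the exterior configuration. The latter holds because $\mathcal T_f$ only involves bonds with both endpoints in $B$, while $f$ and all crossing bonds are exterior. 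Finally, the limiting infinite-strip measure is a product measure (Proposition~\ref{prop:stochastic-identification}), so the conditional bound in (i) applies directly to it.
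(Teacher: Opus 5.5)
Your proof is correct and follows the paper's argument. The upper bound comes from part (iii), and the lower bound comes from a last-exit decomposition into $\mathcal F_{B^c}$-measurable exit events $E_f$, combined with parts (i) and (ii). The only difference is in the constant. The paper takes the pointwise bound $\Pro(H_L\mid\mathcal F_{B^c})\ge 2^{-M}\mathbf 1_{E_{\mathrm{out}}}$, where $\mathbf 1_{E_{\mathrm{out}}}=\max_f\mathbf 1_{E_f}$, via a measurable choice of the first connected exit. This gives $c_1=2^{-M}$ and avoids the factor $|\mathcal X|$ that your union bound loses, a loss that is harmless because $\mathcal X$ is a fixed finite set.
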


\begin{proof}
Part (iii) of Lemma~\ref{lem:finite-local-surgery} gives the inclusion
$H_L\subseteq\{U_b\longleftrightarrow W_L\}$ and hence
$\hp_L\le\Theta_L$.

For the reverse inequality, let $\mathcal F_{B^c}$ be generated by the bonds
that do not have both endpoints in $B$.  Write
$u\stackrel{B^c}{\longleftrightarrow}W_L$ when such bonds contain an open path
from $u$ to $W_L$.  Define
\begin{equation}\label{eq:E-out}
 E_{\mathrm{out}}
 =\bigcup_{f=(u_f,u_f')\in\mathcal X}
   \bigl\{f\text{ is open},\
   u_f'\stackrel{B^c}{\longleftrightarrow}W_L\bigr\}.
\end{equation}
When $L$ is larger than the diameter of $B$, every path from $U_b$ to
$W_L$ has a last exit from $B$.  The remaining part of the path lies outside
$B$.
Consequently
\[
 \{U_b\longleftrightarrow W_L\}\subseteq E_{\mathrm{out}},
 \qquad
 \Pro(E_{\mathrm{out}})\ge\Theta_L.
\]
On $E_{\mathrm{out}}$, choose the first connected exit $f$ in a fixed
deterministic ordering.  This choice is $\mathcal F_{B^c}$-measurable.
Lemma~\ref{lem:finite-local-surgery} and independence of the bonds in $B$
then give
\[
 \Pro(H_L\mid\mathcal F_{B^c})
 \ge 2^{-M}\mathbf1_{E_{\mathrm{out}}}.
\]
Taking expectations yields
\[
 \hp_L\ge2^{-M}\Pro(E_{\mathrm{out}})
 \ge2^{-M}\Theta_L.
\]
The finitely many smaller odd widths are absorbed by decreasing the constant.
\end{proof}

\section{From the strip connection to the half-plane one-arm estimate}

\begin{lemma}
\label{lem:striphalfplane}
There exist constants $0<c_2<C_2<\infty$ such that, for every odd $L\ge3$,
\begin{equation}
\label{eq:striphalfplane}
  c_2\onearm(L)
  \le
  \Theta_L
  \le
  C_2\onearm(L).
\end{equation}
\end{lemma}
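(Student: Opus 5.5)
The plan is to compare both quantities through the strip geometry with the standard RSW toolkit for critical bond percolation on $\Z^2$. These tools are uniform boundary crossing estimates in rectangles of bounded aspect ratio adjacent to a straight free boundary, FKG, and half-plane one-arm quasi-multiplicativity. Throughout, $\Theta_L$ lives in the strip $0\le x\le a_L$ with $a_L\asymp L$. The wired side $W_L$ is at distance $a_L$ from $U_b$, and the free side is the half-plane boundary. I use $\onearm(a_L)\asymp\onearm(L)$, which follows from quasi-multiplicativity and $\onearm(r,Cr)\ge c$ for bounded $C$.

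\emph{Upper bound.} Suppose $U_b\leftrightarrow W_L$. Then an open path in the strip, which is contained in $\Half$ since the strip lies in $x\ge0$, goes from $U_b$ to distance at least $a_L$. Its initial segment up to the first exit from the box of radius $a_L$ around $U_b$ witnesses $A_1^+(1,a_L)$. The only subtlety is that strip bonds on the wired side are identified; this is harmless because the segment stops before reaching $x=a_L$. Hence $\Theta_L\le\onearm(a_L)\le C\onearm(L)$. The rotation of the lattice to $\mathbb L_\diamond$ is handled by the remark that box shapes and bounded shifts of the start vertex change $\onearm$ only by constants.

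\emph{Lower bound.} I would build the connection from three pieces. Let $R=\lfloor a_L/4\rfloor$. The first piece is the event $A_1^+(1,R)$ centred at $U_b$, with probability $\onearm(R)\asymp\onearm(L)$. The second is an open half-annulus crossing in $\Half\cap(B(2R)\setminus B(R))$ separating $U_b$ from the rest of the strip. The third is an open crossing of the rectangle $[R,a_L]\times[-R,R]$ in the $x$ direction, ending on the wired side. The rectangle has bounded aspect ratio, so RSW makes the second and third pieces have probability bounded below. The annulus circuit meets both the arm and the long crossing, so on the intersection of the three events $U_b\leftrightarrow W_L$. All three events are increasing and live in the strip, so FKG gives $\Theta_L\ge c\,\onearm(R)\ge c_2\onearm(L)$.

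The main obstacle is applying RSW uniformly near the free boundary $x=0$ in the rotated lattice $\mathbb L_\diamond$. The half-annulus crossing must hit the boundary line on both ends, and the arm event in the strip must be identified with the half-plane event; for the latter, the strip contains $\Half\cap B(R)$ for $R<a_L$. For the half-annulus I would invoke the standard boundary RSW via reflection symmetry, combining a bounded number of rectangle crossings with FKG. The comparison constants between $\onearm(R)$, $\onearm(a_L)$ and $\onearm(L)$ come from quasi-multiplicativity over boundedly many scales.
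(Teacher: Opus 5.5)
Your strategy is the paper's. The upper bound comes from observing that a connection to $W_L$ leaves a box of radius comparable to $L$, followed by fixed-ratio arm extension. The lower bound glues an arm from $U_b$, an open half-circuit and a bounded-aspect-ratio crossing to the wired side, using FKG and RSW. Working with the true width $a_L$ instead of $L$ is, if anything, more careful than the paper, since it guarantees that $\Half\cap B(2R)$ lies inside the strip.

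However, the lower-bound construction as written does not close. Your first piece is $A_1^+(1,R)$, an arm from $U_b$ that reaches only $\partial B(R)$. Your half-circuit lives in $B(2R)\setminus B(R)$. An arm that ends on the inner boundary of this half-annulus need not cross it, so nothing forces it to meet the half-circuit. Hence the inclusion of the triple intersection in $\{U_b\longleftrightarrow W_L\}$ fails. The third piece is fine: the crossing of $[R,a_L]\times[-R,R]$ starts in $B(R)$ and ends at $x=a_L>2R$, so planarity forces it to meet the half-circuit. The problem is only the arm.

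The repair is the paper's choice. Require the arm from $U_b$ to reach $\partial B(2R)$, so that it crosses the half-annulus and must intersect the half-circuit. Since $2R\le a_L/2$, monotonicity in the outer radius gives $\onearm(2R)\ge\onearm(a_L)\ge c\,\onearm(L)$. Your FKG product bound and the RSW lower bounds for the half-circuit and the long crossing then go through unchanged. Shrinking the annulus instead would also work, but the third piece would then have to start inside the smaller box, so extending the arm is the cleaner fix.
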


\begin{proof}
We use $\ell^\infty$ boxes centred at $U_b$.  Changing the norm changes the
probability by at most a universal factor.  The same is true if the initial
boundary vertex is moved by a bounded distance.

We first prove the upper bound.  If
\[
  U_b\longleftrightarrow W_L,
\]
then an open path starting from $U_b$ reaches the opposite side of a strip
of width $L$.  In particular, for all sufficiently large $L$, this path
leaves $B_{L/2}$.  Hence
\[
  \Theta_L
  \le
  C\,\onearm(L/2).
\]
By fixed-ratio arm extension, which follows from the RSW theorem,
\[
  \onearm(L/2)\le C'\onearm(L).
\]
Therefore
\[
  \Theta_L\le C_2\onearm(L).
\]

For the converse, put
\[
  R=\left\lfloor\frac{L}{8}\right\rfloor.
\]
For all sufficiently large $L$, consider the following three increasing
events:
\begin{enumerate}\renewcommand{\labelenumi}{(\alph{enumi})}
\item there is an open arm from the fixed boundary neighbourhood of $U_b$
      to $\partial B_{2R}$;

\item there is an open half-circuit in the half-annulus
      \[
        B_{2R}\setminus B_R
      \]
      joining the two portions of the boundary line;

\item there is an open crossing from a deterministic segment contained in
      $B_R$ to the opposite wired side $W_L$.
      Equivalently, one may require crossings of a fixed chain of
      overlapping rectangles of uniformly bounded aspect ratio joining
      $B_R$ to $W_L$.
\end{enumerate}

We claim that on the intersection of these three events,
\[
  U_b\longleftrightarrow W_L.
\]
Indeed, the arm in (a) starts inside $B_R$ and reaches
$\partial B_{2R}$.  Hence it crosses the half-annulus
$B_{2R}\setminus B_R$.  The half-circuit in (b), together with the
corresponding portion of the boundary line, separates the inner part of
the half-disk from the exterior of $B_{2R}$.  By planarity, the arm in
(a) must therefore intersect the half-circuit in (b).

The open path in (c) also starts inside $B_R$.  It reaches the wired side,
which lies outside $B_{2R}$.  Hence it crosses the same half-annulus and must
meet the half-circuit in (b).  Consequently all
three open pieces belong to the same open cluster, and this cluster
connects $U_b$ to $W_L$.

Therefore
\[
  \Theta_L
  \ge
  \Pro\bigl((a)\cap(b)\cap(c)\bigr).
\]
All three events are increasing, so the FKG inequality gives
\[
  \Pro\bigl((a)\cap(b)\cap(c)\bigr)
  \ge
  \Pro((a))\,\Pro((b))\,\Pro((c)).
\]

The event in (a) has probability comparable to $\onearm(2R)$, up to a
multiplicative constant independent of \(R\)::
\[
  \Pro((a))\ge c_0\onearm(2R).
\]
The RSW theorem gives a universal constant $c_1>0$ such that
\[
  \Pro((b))\ge c_1.
\]
The ratio $L/R$ is bounded.  Thus the event in (c) follows from crossings
of a fixed number of rectangles with bounded aspect ratio.  The RSW theorem and the FKG inequality therefore give
\[
  \Pro((c))\ge c_2'>0.
\]
It follows that
\[
  \Theta_L
  \ge
  c\,\onearm(2R).
\]
Since $2R\le L$ and the one-arm probability is decreasing in its outer
radius,
\[
  \onearm(2R)\ge\onearm(L).
\]
Hence
\[
  \Theta_L\ge c_2\onearm(L).
\]

The finitely many smaller odd values of $L$ are absorbed by decreasing
$c_2$ and increasing $C_2$ if necessary.  This proves
\eqref{eq:striphalfplane}.
\end{proof}

\begin{lemma}
\label{lem:onearm-quasimultiplicativity}
There is $C_{\mathrm{qm}}<\infty$ such that, for all integers
$1\le r\le s\le R$,
\begin{equation}\label{eq:onearm-quasimultiplicativity}
 C_{\mathrm{qm}}^{-1}\onearm(r,s)\onearm(s,R)
 \le \onearm(r,R)
 \le C_{\mathrm{qm}}\onearm(r,s)\onearm(s,R).
\end{equation}
\end{lemma}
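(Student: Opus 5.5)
The upper bound needs no RSW at all; the lower bound is the standard RSW gluing argument, run in half-annuli instead of full annuli.

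\emph{Upper bound.} Assume first that $r\le s/4$ and $4s\le R$; the other cases are treated at the end. On $A_1^+(r,R)$, the open path in $\Half$ from $\partial B_r$ to $\partial B_R$ contains a sub-path crossing $B_s\setminus B_r$ inside $\Half$. It also contains a sub-path crossing $B_R\setminus B_s$ inside $\Half$. These two sub-paths use disjoint sets of bonds, namely those in $B_s$ and those outside $B_s$. By independence,
\[
\onearm(r,R)\le\onearm(r,s)\,\onearm(s,R).
\]
So this direction holds with constant one, and only the lower bound needs work.

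\emph{Lower bound.} Fix $r\le s\le R$ with $4r\le s$ and $4s\le R$. Consider four increasing events.
\begin{enumerate}\renewcommand{\labelenumi}{(\alph{enumi})}
\item There is an arm in $\Half$ from $\partial B_r$ to $\partial B_{s/2}$.
\item There is an open half-circuit in $(B_{s}\setminus B_{s/2})\cap\Half$ joining the two parts of the boundary line.
\item There is an open half-circuit in $(B_{2s}\setminus B_{s})\cap\Half$.
\item There is an arm in $\Half$ from $\partial B_{2s}$ to $\partial B_R$.
\end{enumerate}
Alone, these do not connect: (b) and (c) sit in disjoint half-annuli. I would therefore add a fifth event:
\begin{enumerate}\renewcommand{\labelenumi}{(\alph{enumi})}\setcounter{enumi}{4}
\item There is an open crossing of $(B_{2s}\setminus B_{s/2})\cap\Half$ in the radial direction, realised as the crossing of one fixed rectangle of bounded aspect ratio.
\end{enumerate}
The planarity argument of Lemma~\ref{lem:striphalfplane} then applies. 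The arm in (a) meets the half-circuit in (b). The radial crossing in (e) meets both half-circuits (b) and (c). The arm in (d) meets (c). Hence all five pieces lie in one open cluster, and their intersection implies $A_1^+(r,R)$.

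By FKG,
\[
\onearm(r,R)\ \ge\ \onearm(r,s/2)\,c^3\,\onearm(2s,R).
\]
Here the factor $c^3$ comes from the RSW theorem in half-annuli, which bounds the probabilities of (b), (c) and (e) below by a universal constant.

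It remains to compare the truncated arms with the original ones. I would show
\[
\onearm(r,s/2)\ge c\,\onearm(r,s),\qquad \onearm(2s,R)\ge c\,\onearm(s,R).
\]
Both follow from monotonicity in the radii. An arm from $\partial B_r$ to $\partial B_s$ contains an arm from $\partial B_r$ to $\partial B_{s/2}$, so the first inequality even holds with $c=1$. The second is the inner-radius analogue of the fixed-ratio arm extension quoted in Lemma~\ref{lem:striphalfplane}. Any arm from $\partial B_{2s}$ to $\partial B_R$ is glued, using an RSW half-circuit in $(B_{2s}\setminus B_s)\cap\Half$ and a radial crossing, into an arm from $\partial B_s$ to $\partial B_R$. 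This gives $\onearm(s,R)\ge c\,\onearm(2s,R)$.

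\emph{Degenerate cases and main obstacle.} If $s<4r$ or $R<4s$, one of the two factors on the right is bounded below by a constant. In that case the two-sided estimate follows from monotonicity together with fixed-ratio extension, which shows that a change of radius by a factor of at most $4$ costs a bounded factor. The small scales $r,s\le 4$ only affect the constants.

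I expect the main obstacle to be justifying RSW for half-circuits and radial crossings in half-annuli under the free-boundary half-plane geometry. My plan there is to reflect the standard rectangle-crossing RSW bounds. Concretely, I would build each half-circuit from a bounded number of rectangle crossings lying in $\Half$ and glue them with FKG.
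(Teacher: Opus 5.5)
Your upper bound is fine, and sharper than the paper's. Cut the arm at its first hitting of $\partial B_s$ and at its last vertex in $B_s$. This separates bonds with both endpoints in $B_s$ from bonds with an endpoint outside, so independence gives constant one for all $r\le s\le R$.

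The gap is in the gluing step of the lower bound. Planarity forces an open path to meet a half-circuit only when the path joins the two boundary arcs of the half-annulus containing that circuit.

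\begin{itemize}
\item Your arm in (a) runs from $\partial B_r$ only to $\partial B_{s/2}$, the \emph{inner} boundary of the half-annulus $(B_s\setminus B_{s/2})\cap\Half$ that hosts (b). It never crosses that half-annulus, so the half-circuit in (b) may lie near $\partial B_s$ and miss it.
\item Symmetrically, the arm in (d) starts on $\partial B_{2s}$, the \emph{outer} boundary of the half-annulus hosting (c), and need not touch (c).
\end{itemize}

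There are configurations in which (a)--(e) all occur and yet every open path from $\partial B_r$ stays in $B_{s/2}$. So the claimed inclusion of (a)$\cap\cdots\cap$(e) in $A_1^+(r,R)$ is false.

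The same defect appears in your sketch of the inner extension: an arm from $\partial B_{2s}$ need not meet a half-circuit in $B_{2s}\setminus B_s$. That sketch also proves $\onearm(s,R)\ge c\,\onearm(2s,R)$, which is the reverse of what you said you needed. The inequality you actually used, $\onearm(2s,R)\ge\onearm(s,R)$, is just the inclusion $A_1^+(s,R)\subseteq A_1^+(2s,R)$. That both comparisons reduce to pure monotonicity is the symptom: you replaced the arm events by the larger events $A_1^+(r,s/2)$ and $A_1^+(2s,R)$, which are too short to be glued.

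The repair is to make each arm cross a circuit annulus. Keep your (b), (c), (e), and take (a) to be $A_1^+(r,s)$ and (d) to be $A_1^+(s,R)$. For $4r\le s$ and $4s\le R$:
\begin{itemize}
\item the first arm crosses $(B_s\setminus B_{s/2})\cap\Half$ and meets (b);
\item the second arm crosses $(B_{2s}\setminus B_s)\cap\Half$ and meets (c);
\item (e) links (b) with (c).
\end{itemize}
FKG and RSW then give $\onearm(r,R)\ge c^3\,\onearm(r,s)\,\onearm(s,R)$ with no comparison step at all.

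The paper takes a different route. It uses one half-circuit in $B_{2s}\setminus B_{s/2}$ and lengthened arms $A_1^+(r,2s)$ and $A_1^+(s/2,R)$, both of which cross it. It then needs the nontrivial fixed-ratio extensions $\onearm(r,2s)\ge c\,\onearm(r,s)$ and $\onearm(s/2,R)\ge c\,\onearm(s,R)$. Your corrected construction trades those two extension estimates for a second half-circuit and a radial crossing.
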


\begin{proof}
First assume $4r\le s\le R/4$.  The event $A_1^+(r,R)$ gives an arm in
each of the two separated annuli $(r,s)$ and $(s,R)$.  Assign a fixed-width
neighbourhood of radius $s$ to one of these annuli.  Independence then gives
the upper bound in \eqref{eq:onearm-quasimultiplicativity}, with a fixed-ratio
change of the middle scale.  Those changes cost only a universal factor by the
RSW theroem.

For the lower bound, consider three increasing events.  Let
$E_{\mathrm{in}}$ be the event that an open arm joins $\partial B_r$ to
$\partial B_{2s}$.  Let $E_{\mathrm{circ}}$ be the event that an open
half-circuit crosses $B_{2s}\setminus B_{s/2}$ and joins the two portions of
the boundary line.  Let $E_{\mathrm{out}}$ be the event that an open arm joins
$\partial B_{s/2}$ to $\partial B_R$.

On $E_{\mathrm{in}}$, the arm starts inside $B_{s/2}$ and reaches
$\partial B_{2s}$.  It therefore crosses the half-annulus
$B_{2s}\setminus B_{s/2}$.  Planarity forces it to meet the half-circuit on
$E_{\mathrm{circ}}$.

On $E_{\mathrm{out}}$, the arm starts at $\partial B_{s/2}$ and reaches
$\partial B_R$, where $R>2s$.  It also crosses the same half-annulus and must
meet the half-circuit.

Therefore, on
\[
E_{\mathrm{in}}\cap E_{\mathrm{circ}}\cap E_{\mathrm{out}},
\]
the two arms are connected through the half-circuit and hence belong to
a single open cluster containing an arm from $\partial B_r$ to
$\partial B_R$.  Thus
\[
E_{\mathrm{in}}\cap E_{\mathrm{circ}}\cap E_{\mathrm{out}}
\subseteq
A_1^+(r,R),
\]
and consequently
\[
\onearm(r,R)
\ge
\Pro\bigl(
E_{\mathrm{in}}\cap E_{\mathrm{circ}}\cap E_{\mathrm{out}}
\bigr).
\]

Since all three events are increasing, the FKG inequality gives
\[
\begin{aligned}
\onearm(r,R)
&\ge
\Pro(E_{\mathrm{in}})
\Pro(E_{\mathrm{circ}})
\Pro(E_{\mathrm{out}}).
\end{aligned}
\]
The RSW thoerem yields a universal constant $c>0$ such that
\[
\Pro(E_{\mathrm{circ}})\ge c,
\]
and therefore
\[
\onearm(r,R)
\ge
c\,\onearm(r,2s)\onearm(s/2,R).
\]

Finally, fixed-ratio arm extension gives
\[
\onearm(r,2s)\asymp\onearm(r,s),
\qquad
\onearm(s/2,R)\asymp\onearm(s,R).
\]
Hence
\[
\onearm(r,R)
\ge
C_{\mathrm{qm}}^{-1}
\onearm(r,s)\onearm(s,R).
\]

If $s<4r$ or $R<4s$, the  RSW theorem bounds the missing factor above and
below by positive constants.  Thus the same estimate holds for every
$r\le s\le R$.  This is also the
one-arm case of \cite[Section~6.2]{DMT}.
\end{proof}

We can now prove the main result.

\begin{theorem}
\label{thm:onearm}
There exist constants $0<c<C<\infty$ such that, for every $R\ge2$,
\begin{equation}
\label{eq:onearmscale1}
  cR^{-1/3}
  \le
  \onearm(R)
  \le
  CR^{-1/3}.
\end{equation}
More generally, for all integers $1\le r<R$,
\begin{equation}
\label{eq:onearmgeneral}
  c\left(\frac rR\right)^{1/3}
  \le
  \onearm(r,R)
  \le
  C\left(\frac rR\right)^{1/3}.
\end{equation}
\end{theorem}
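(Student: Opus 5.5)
The plan is to chain the comparisons already established and then pass from odd strip widths to general scales using monotonicity and quasi-multiplicativity.

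First, the single-scale estimate \eqref{eq:onearmscale1}. For odd $L\ge3$, Corollary~\ref{cor:strip}, Lemma~\ref{lem:localcomparison} and Lemma~\ref{lem:striphalfplane} combine to give
\[
 \onearm(L)\le c_2^{-1}\Theta_L\le c_2^{-1}c_1^{-1}\hp_L\le c_2^{-1}c_1^{-1}C_0L^{-1/3},
\]
and symmetrically $\onearm(L)\ge C_2^{-1}C_1^{-1}c_0L^{-1/3}$. For an arbitrary integer $R\ge2$, choose the odd integer $L\in\{R,R+1\}$. Then $\onearm(L)\le\onearm(R)\le\onearm(L-2)$ when $L\ge5$, because the one-arm probability is decreasing in its outer radius. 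Since $L/R$ and $(L-2)/R$ are bounded above and below, this transfers both bounds to $R$ with adjusted constants. The finitely many small $R$ are absorbed into the constants, since $\onearm(R)>0$ for each fixed $R$.

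Second, the two-scale estimate \eqref{eq:onearmgeneral}. Apply Lemma~\ref{lem:onearm-quasimultiplicativity} with the inner radius $1$, the middle radius $s=r$ and the outer radius $R$. This gives
\[
 \onearm(R)=\onearm(1,R)\asymp\onearm(1,r)\,\onearm(r,R)=\onearm(r)\,\onearm(r,R).
\]
Dividing, we obtain $\onearm(r,R)\asymp\onearm(R)/\onearm(r)$. Inserting \eqref{eq:onearmscale1} for both $r\ge2$ and $R$ then yields $\onearm(r,R)\asymp (r/R)^{1/3}$. The case $r=1$ is just \eqref{eq:onearmscale1} itself.

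I expect the main obstacle to be bookkeeping around degenerate ranges rather than anything substantive. When $r$ is comparable to $R$, for instance $R<2r$, the quotient argument still works, but it is cleaner to note directly that RSW makes $\onearm(r,R)$ bounded below by a constant. In that range $(r/R)^{1/3}$ is also bounded between constants, and the upper bound $\onearm(r,R)\le1$ is trivial. The only other point to check is that the constants in Lemma~\ref{lem:onearm-quasimultiplicativity} are uniform down to $r=1$, and that lemma asserts this for all $1\le r\le s\le R$. No new probabilistic input is needed beyond the cited results.
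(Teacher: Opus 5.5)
Your proposal is correct and follows the paper's own proof. You chain Corollary~\ref{cor:strip} with Lemmas~\ref{lem:localcomparison} and~\ref{lem:striphalfplane} at odd widths, pass to arbitrary $R$ through a neighbouring odd width using monotonicity in the outer radius, then apply Lemma~\ref{lem:onearm-quasimultiplicativity} with middle scale $r$ and divide. Your extra bookkeeping makes explicit what the paper's short proof leaves implicit: you treat $r=1$ separately, since $\onearm(1)$ is not covered by \eqref{eq:onearmscale1}, and you absorb small $R$ into the constants.
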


\begin{proof}
For odd $L$, Lemmas~\ref{lem:localcomparison} and
\ref{lem:striphalfplane}, together with Corollary~\ref{cor:strip}, give
\[
  \onearm(L)
  \asymp
  \Theta_L
  \asymp
  \hp_L
  \asymp
  L^{-1/3}.
\]
Choosing a neighbouring odd integer and changing the radius by at most a
bounded amount proves \eqref{eq:onearmscale1} for every $R$.

Lemma~\ref{lem:onearm-quasimultiplicativity} gives
\[
 \onearm(R)\asymp\onearm(r)\onearm(r,R).
\]
Dividing the estimates in \eqref{eq:onearmscale1} at scales $R$ and $r$
proves \eqref{eq:onearmgeneral}.
\end{proof}


\begin{thebibliography}{99}
\setlength{\itemsep}{2pt}

\bibitem{AB}
A.~Ayyer and R.~E. Behrend,
\emph{Factorization theorems for classical group characters, with applications
to alternating sign matrices and plane partitions},
J. Combin. Theory Ser. A \textbf{165} (2019), 78--105.

\bibitem{dGP}
J.~de Gier and P.~Pyatov,
\emph{Factorised solutions of Temperley--Lieb $q$KZ equations on a segment},
Adv. Theor. Math. Phys. \textbf{14} (2010), 795--877.

\bibitem{DMT}
H.~Duminil-Copin, I.~Manolescu, and V.~Tassion,
\emph{Planar random-cluster model: fractal properties of the critical phase},
Probab. Theory Related Fields \textbf{181} (2021), 401--449.

\bibitem{IP}
Y.~Ikhlef and A.~Ponsaing,
\emph{Finite-size left-passage probability in percolation},
J. Stat. Phys. \textbf{149} (2012), 10--36.

\end{thebibliography}
\end{document}